\newcommand{\R}{\mathbb R}
\DeclareMathOperator{\tr}{\mathrm{tr}}
\DeclareMathOperator{\intt}{int}
\newtheorem{theorem}{Theorem}[section]
\newtheorem{lemma}[theorem]{Lemma}
\newtheorem{corollary}[theorem]{Corollary}
\newtheorem*{conjecture}{Catenoid Conjecture}
\theoremstyle{definition}
\newtheorem{example}[theorem]{Example}
\newtheorem{remark}[theorem]{Remark}
\numberwithin{equation}{section}
\numberwithin{figure}{section}
\begin{document}
	
\title[A Note on Free Boundary Hypersurfaces in Space Forms Balls]{A Note on Free Boundary Hypersurfaces in Space Forms Balls}	
\author[Iury Domingos]{Iury Domingos}
\address{Universidade Federal da Paraíba\\
	Departamento de Matem\'atica\\
	58051-900. Jo\~ao Pessoa - PB, Brazil.}
\email{iury.domingos@im.ufal.br}

\author[Roney Santos]{Roney Santos}
\address{Universidade Federal de Minas Gerais\\
	Instituto de Ci\^encias Exatas\\		
	31270-901. Belo Horizonte - MG, Brazil.}
\email{roneysantos@mat.ufc.br}

\author[Feliciano Vit\'orio]{Feliciano Vit\'orio}	
\address{Universidade Federal de Alagoas\\
	Instituto de Matem\'{a}tica\\
	Campus A. C. Sim\~{o}es, BR 104 - Norte, Km 97, 57072-970.
	Macei\'o - AL, Brazil.}	
\email{feliciano@pos.mat.ufal.br}
	
\thanks{The second author was partially supported by CAPES/Brazil.}
	
\keywords{Free boundary problem, minimal surfaces, geometric inequality}
\subjclass[2020]{Primary 58J32; Secondary 53A10, 58C35}

	
\begin{abstract}
	In this article, we establish a relationship between geometric quantities of a hypersurface restricted to its boundary, and the geometric quantities of its boundary as a hypersurface of the boundary of the ball. As a first application, we prove that the quantity of umbilical points of a free boundary surface in the unit ball counted with multiplicities depend only on its topology; moreover, we obtain as consequences that free boundary surfaces are annuli if, and only if, they have no umbilical points, and a new proof of the Nitsche Theorem. Secondly, we prove two geometric integral inequalities for free boundary hypersurfaces, and use them to relate some geometric aspects of the hypersurface with topological aspects of its boundary in the three-dimensional case, and to give a new point of view to the Catenoid Conjecture.
		
\end{abstract}

\maketitle
\section{Introduction}
Minimal surface have been studied in Differential Geometry since 18th century. Initially motivated by variational problems, L. Euler and J. Lagrange considered the problem of finding a surface which is area-minimizing with a given boundary configuration. In 1849, J. Plateau, a mathematical physicist, verified that minimal surfaces can be obtained by immersing a wire frame into soapy water, and since then, the problem proposed by L. Euler and J. Lagrange became known as Plateau's Problem. In 1931, the existence of those surfaces when the boundary is a Jordan curve was showed independently by J. Douglas and T. Rad\'o, and it is a milestone in minimal surfaces theory.

In terms of the mean curvature, minimal surfaces have zero mean curvature. A generalisation of these surfaces are constant mean curvature surfaces. They also minimize area subject to a volume constraint.


The main goal of this work is to study free boundary constant mean curvature hypersurfaces in space forms balls. For our propose, a compact hypersurface $\Sigma,$ with non-empty boundary $\partial \Sigma$, into a ball $B$ in a space form $\mathbb{M}_c^{n + 1}$ is said to be a {\it free boundary} hypersurface in $B$ if $\intt(\Sigma) \subset \intt(B),$ $\partial \Sigma \subset \partial B$ and $\Sigma$ meets $\partial B$ orthogonally.

This kind of problem was firstly studied in the middle of 20th century by R. Courant and H. Lewy. They considered the area-minimizing problem among all surfaces with predetermined boundary lying in a surface. It is worthwhile to mention that it is a natural extension of Plateau's Problem.



In the unit ball $B$ centered at the origin of $\mathbb{R}^3,$ the simplest examples of free boundary constant mean curvature surfaces are the equatorial disk, some spherical caps and the critical catenoid (for this last surface, see \cite[Example 1.2]{fraser2012minimal}). Many examples of free boundary minimal surfaces were obtained for a large class of surfaces with different number of boundary connected components and genus (cf. \cite{folha2017free,fraser2016sharp,kapouleas2017free, MR4279102}). In the higher dimensional case, we also have examples of free boundary minimal hypersurfaces (cf. \cite{freidin2017free}).

There is interesting analogy between closed minimal submanifolds in the sphere and free boundary minimal submanifolds in the Euclidean ball. For example, the classical Nitsche Theorem states that a free boundary minimal disk in the unit ball centered at the origin of $\mathbb{R}^3$ is the equatorial disk (cf. \cite[Theorem 1]{nitsche1985stationary}), and the Almgren Theorem states that a minimal sphere in $\mathbb{S}^3$ is an equator of $\mathbb{S}^3$ (cf. \cite[Lemma 1]{almgren1966some}).


Due to the work of A. Fraser and R. Schoen, free boundary minimal surfaces have received much attention in the last decade. In \cite{fraser2011first} and \cite{fraser2016sharp}, they studied the spectrum of the Dirichlet-to-Neumann map on surfaces with boundary and showed that any smooth metric maximizing the first normalized Steklov eigenvalue must be realized by a free boundary minimal surface.




On the recent research in free boundary minimal surfaces, it is important to highlight the acclaimed conjecture about the uniqueness of critical catenoid due to A. Fraser and M. Li. In comparison with closed minimal surfaces in $\mathbb{S}^3$, this conjecture is analogous to the uniqueness of minimal torus in $\mathbb{S}^3$ (cf. \cite[Theorem 1]{brendle2013embedded}).

\begin{conjecture}[\cite{fraser2014compactnessof}]\label{catenoid}
    Up to congruences, the unique embedded free boundary minimal annulus in the Euclidean unit ball centered at the origin of $\mathbb{R}^3$ is the critical catenoid.
\end{conjecture}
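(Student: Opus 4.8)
The plan is to reduce the rigidity of the critical catenoid to a statement about the Hopf differential of a reflected surface. Let $\Sigma$ be an embedded free boundary minimal annulus in the unit ball $B \subset \R^3$. Since $\Sigma$ is minimal, its Hopf differential $\Phi = \langle x_{zz}, N \rangle\, dz^2$ is holomorphic for the induced conformal structure, and its zeros are precisely the umbilic (equivalently, flat) points of $\Sigma$. Invoking the characterization announced in the abstract — that a free boundary surface is an annulus if and only if it has no umbilic points — I conclude that $\Phi$ is nowhere vanishing on $\Sigma$.

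Next I would record that the free boundary condition makes $\partial\Sigma$ a line of curvature. Indeed, along $\partial\Sigma$ the outward unit conormal $\nu$ coincides with the position vector $x$, so for a unit tangent $T$ to $\partial\Sigma$ one has $\mathrm{II}(T,\nu) = \langle \bar\nabla_T \nu, N\rangle = \langle \bar\nabla_T x, N\rangle = \langle T, N\rangle = 0$, where $\bar\nabla$ is the ambient connection; hence $T$ is a principal direction. Consequently $\Phi$ is real along each boundary circle. The orientable conformal double of an annulus across its two boundary components is a torus $\hat\Sigma$, and by this reality condition $\Phi$ extends via Schwarz reflection to a holomorphic quadratic differential on $\hat\Sigma$ that is still nowhere zero.

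On a torus the canonical bundle is trivial, so the space of holomorphic quadratic differentials is one-dimensional and spanned by the nowhere-vanishing $dw^2$ in a flat coordinate $w$; a nowhere-zero holomorphic quadratic differential is therefore a nonzero constant multiple of $dw^2$. Thus $\Phi = c\, dw^2$ is constant. In terms of Weierstrass data $(g, \omega = f\,dz)$ this says that $f g'$ is constant, while the conformal modulus of $\Sigma$ is fixed by the reflection. The remaining task is to combine $f g' = \text{const}$ with the boundary normalizations $\langle x, N\rangle = 0$ and $|x| = 1$ on $\partial\Sigma$ to force the Weierstrass data to be that of the catenoid, $g = e^{w}$ and $\omega = e^{-w}\,dw$ on the appropriate cylinder, and then to identify the radius with that of the critical catenoid.

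The main obstacle is exactly this last step. A constant Hopf differential pins down only the modulus of the second fundamental form and leaves the Gauss map essentially free — the entire associate family of the catenoid shares the same $|\Phi|$ — so $f g' = \text{const}$ is far from determining $\Sigma$. Forcing the Gauss map to be that of the catenoid must use embeddedness in an essential way, and it is precisely here that embeddedness resists encoding into the holomorphic, reflected-torus picture. I expect that closing this gap requires an extra ingredient not supplied by the holomorphic data alone: an additional symmetry, a spectral input showing that the coordinate functions realize the first Steklov eigenvalue with the correct multiplicity, or a sharp integral inequality saturated only by the critical catenoid. This is why the statement remains a conjecture; the role of the present circle of ideas is to feed the boundary geometry and the integral inequalities into this reduction, so that embeddedness finally has a foothold from which to be exploited.
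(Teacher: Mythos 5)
The statement you were asked to prove is labelled a \emph{conjecture}, and it is one: neither this paper nor (at the time of its writing) any other contains a proof. The paper's only contact with it is Corollary \ref{igualdade}, which shows that equality holds in \eqref{ineq} if and only if the free boundary surface is an annulus, thereby \emph{reformulating} the conjecture as a uniqueness question for the equality case of that integral inequality; no uniqueness is proved. So there is no proof in the paper to compare yours against, and any complete argument you produced would have had to be new mathematics.

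Your partial reduction is sound as far as it goes, and it correctly leans on results the paper does establish: by Corollary \ref{corolarioumbilico} a free boundary constant mean curvature annulus has no umbilical points, so the Hopf differential is nowhere zero; the free boundary condition makes $\nu$ (equivalently the boundary tangent) a principal direction, exactly as in the proof of Lemma \ref{dSigma}, so the Hopf differential is real along $\partial\Sigma$ and extends by Schwarz reflection to the conformal double, a torus, where it must be a nonzero constant multiple of $dw^2$. The genuine gap is the one you name yourself: a constant Hopf differential fixes only $|\Phi|$, which is shared by the whole associate family and by catenoids of every neck size, and it constrains neither the Gauss map, nor the conformal modulus of the annulus, nor the placement in the ball; embeddedness and the condition $|x|=1$, $\langle x,N\rangle=0$ on $\partial\Sigma$ do not translate into the holomorphic picture. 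Every known partial result closes this gap only by adding hypotheses (e.g.\ that the coordinate functions are first Steklov eigenfunctions, as in Fraser--Schoen, or extra reflection symmetries), and removing them is precisely the open problem. Your write-up is honest about this, but as submitted it is an outline of a known strategy plus an acknowledged unfilled hole, not a proof --- which is consistent with the statement's status as a conjecture.
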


This work is organized as follows. In section \ref{preliminares}, we quote some basic concepts and facts which will be used and addressed throughout this paper. The first fact is the Simons Inequality for constant mean curvature hypersurfaces in space forms. Next, we recall a version of a Hardy-type integral inequality, due to M. Batista, H. Mirandola and the third author, for free boundary constant mean curvature hypersurfaces in the unit ball in a space form with non-positive constant sectional curvature.

In section \ref{Stahl}, we establish two identities that relate the square norm of the umbilicity tensor of a free boundary hypersurface in a unit ball of a space form restricted to its boundary, and the umbilicity tensor of this boundary as a hypersurface of the sphere that delimits the ball. As a first consequence of this result, we prove that if the squared norm of the umbilicity tensor of hypersurface is a subharmonic function, then the boundary of the hypersurface is a totally umbilical hypersurface of the boundary of the ball and has constant mean curvature.


In section \ref{umbilical}, we describe a relation between the Euler characteristic and umbilical points of a free boundary surface in the unit ball of a space form. The proved relation has a analogous for closed surfaces with constant mean curvature in the sphere. As a consequence, we prove that the constant mean curvature free boundary annuli in the unit ball are the unique surfaces without umbilical points, and we obtain a new proof of the Nitsche Theorem \cite[Theorem 4.1]{souamstationary} for constant mean curvature surfaces in space forms balls.


Section \ref{gii} is dedicated to prove some geometric integral inequalities. Firstly, we integrate the Simons Inequality to find our first geometric inequality for free boundary constant mean curvature hypersurfaces in the unit ball in space forms, and we characterise the totally umbilical $n$-disks in the equality case. Applying this first inequality for the three-dimensional case, we can relate the geometry of this hypersurface and the topology of its connected boundary components. Secondly, we take inspiration in the work of G. Catino \cite{catino2016remark} that establish a geometric integral inequality for closed constant mean curvature hypersurfaces in space forms, and as consequence, he has reobtained the Hopf-Chern Theorem. In this direction, we first generalise \cite[Theorem 1.1]{catino2016remark} for constant mean curvature hypersurfaces with non-empty boundary. By means of this inequality, we use the free boundary condition to characterise topologically the equality in the two-dimensional case, and we provide a new perspective to the Catenoid Conjecture.

Throughout this article, we will omit the volume element in the integrals for sake of simplicity.

\subsection*{Acknowledgements}
We are grateful to L. Ambrozio and E. Barbosa for the relevant observations and discussions on the results.

\section{Preliminaries}\label{preliminares}

Let $\Sigma^n$ be an oriented smooth hypersurface in a space form $\mathbb{M}_c^{n + 1}$ with constant sectional curvature $c \in \mathbb{R}$. Let us denote by $A$ the Weingarten operator of $\Sigma.$ The mean curvature $H$ of $\Sigma$ is defined as the trace of $A.$ We also consider the umbilicity tensor $\phi$ of $\Sigma$, given by
\[\phi = A - \frac{H}{n} I,\]
where $I$ is the identity tensor on $\mathrm{T} \Sigma.$ Moreover, when $H$ is a constant function, we define the polynomial $p_H$ associated to $\Sigma$ by
\begin{equation*}
	p_H(t) = t^2 + \frac{n - 2}{\sqrt{n(n - 1)}}|H|t - \frac{H^2}{n} - nc,\ \ t \in \mathbb{R}.
\end{equation*}

The following result is the Simons Inequality for hypersurfaces with constant mean curvature. For sake of completeness, we present a proof for this result and characterise the equality when the hypersurface is not totally umbilical and has dimension greater than 2.

\begin{lemma}[Simons Inequality, \cite{alencar1994hypersurfaces, docarmo1994}]\label{lemasimon}
	Let $\Sigma^n$ be a hypersurface in $\mathbb{M}_c^{n + 1}.$ If $\Sigma$ has constant mean curvature $H$, then at the points of $\Sigma$ we have
	\begin{equation}\label{simon}
		|\phi|^2 p_H(|\phi|) \geq \frac{n+2}{n} |\nabla_\Sigma|\phi||^2 - \frac{1}{2} \Delta_\Sigma |\phi|^2.
	\end{equation}
	Moreover, if $n \geq 3$ and $\Sigma$ is not a totally umbilical hypersurface, the equality occurs if, and only if:
	\begin{itemize}
		\item[$\mathrm{(i)}$] $\Sigma$ is a catenoid when $H=0$ and $c\leq0$;
        \item[$\mathrm{(ii)}$] $\Sigma$ is either a Clifford torus or an Otsuki hypersurface when $H=0$ and $c>0$;
		\item[$\mathrm{(iii)}$] $\Sigma$ is a Delaunay hypersurface when $H\neq 0.$
	\end{itemize}
\end{lemma}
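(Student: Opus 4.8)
The plan is to deduce the inequality from a Bochner-type identity for the traceless operator $\phi$, refined by two sharp pointwise estimates. First I would establish the Simons identity. Since $\mathbb{M}_c^{n+1}$ has constant sectional curvature, the Codazzi equation reduces to $\nabla_X(A)Y=\nabla_Y(A)X$, so $A$ — and hence $\phi$, because $H$ is constant — is a Codazzi tensor. Running the classical Simons computation (commuting two covariant derivatives on the shape operator and substituting the Gauss equation), contracting with $A$, and then rewriting everything in terms of $\phi$ via $A=\phi+\frac{H}{n}I$, $\tr\phi=0$, $\tr(\phi^2)=|\phi|^2$, a direct calculation yields
\[
\frac{1}{2}\Delta_\Sigma|\phi|^2 = |\nabla\phi|^2 + H\,\tr(\phi^3) - |\phi|^4 + \Big(\frac{H^2}{n} + nc\Big)|\phi|^2 .
\]

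Comparing this identity with the expansion $|\phi|^2 p_H(|\phi|)=|\phi|^4+\frac{n-2}{\sqrt{n(n-1)}}|H||\phi|^3-\frac{H^2}{n}|\phi|^2-nc|\phi|^2$ reduces the lemma to
\[
|\nabla\phi|^2 + H\,\tr(\phi^3) \;\geq\; \frac{n+2}{n}\,|\nabla_\Sigma|\phi||^2 - \frac{n-2}{\sqrt{n(n-1)}}|H|\,|\phi|^3 ,
\]
which I would obtain from two independent estimates. The gradient terms are controlled by the refined Kato inequality $|\nabla\phi|^2\geq \frac{n+2}{n}|\nabla_\Sigma|\phi||^2$, valid for traceless Codazzi tensors (the refined constant $1+\tfrac{2}{n}$ being exactly what is needed here). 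The cubic term is controlled by the sharp algebraic inequality of Okumura, $|\tr(\phi^3)|\leq \frac{n-2}{\sqrt{n(n-1)}}|\phi|^3$, valid for any traceless symmetric operator, which gives $H\,\tr(\phi^3)\geq -\frac{n-2}{\sqrt{n(n-1)}}|H||\phi|^3$. Summing the two estimates proves \eqref{simon}.

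The delicate part is the equality characterization for $n\geq 3$. Equality in \eqref{simon} forces equality in both auxiliary estimates at every point where $\phi\neq0$. Okumura's inequality is an equality exactly when $\phi$ has two distinct eigenvalues of multiplicities $1$ and $n-1$, so $\Sigma$ has two distinct principal curvatures with these multiplicities; equality in the Kato inequality then constrains $\nabla\phi$ so that the principal directions assemble into an integrable distribution. From this I would argue that such a constant-mean-curvature hypersurface is a rotation hypersurface and reduce its profile to a single ODE. Solving that ODE case by case — and invoking Otsuki's classification of minimal hypersurfaces with two principal curvatures to separate the Clifford torus from the Otsuki hypersurfaces when $H=0$ and $c>0$ — yields precisely the listed families: catenoids for $H=0$, $c\leq0$; Clifford tori or Otsuki hypersurfaces for $H=0$, $c>0$; and Delaunay hypersurfaces for $H\neq0$. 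Passing from the pointwise two-eigenvalue data to these explicit surfaces is the main obstacle and the most computation-heavy step.
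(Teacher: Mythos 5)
Your proof of the inequality itself is correct and follows essentially the same route as the paper: the Simons--Cheng--Yau identity
$\frac{1}{2}\Delta_\Sigma|\phi|^2=|\nabla_\Sigma\phi|^2+H\tr(\phi^3)-|\phi|^4+\bigl(\tfrac{H^2}{n}+nc\bigr)|\phi|^2$
(the paper derives it in an eigenframe of $\phi$, you state it in trace form, but the computations are identical), followed by the refined Kato inequality for traceless Codazzi tensors and Okumura's estimate $|\tr(\phi^3)|\leq\frac{n-2}{\sqrt{n(n-1)}}|\phi|^3$. Up to that point there is nothing to object to.

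The gap is in the equality analysis. You claim that equality in \eqref{simon} ``forces equality in both auxiliary estimates,'' and you make the Okumura equality (two principal curvatures, multiplicities $1$ and $n-1$) the starting point for all three cases. But the Okumura term enters the identity multiplied by $H$: when $H=0$ it vanishes identically, so equality in \eqref{simon} forces \emph{only} equality in the refined Kato inequality and gives no information whatsoever about the eigenvalue structure of $\phi$. Hence your route to cases $\mathrm{(i)}$ and $\mathrm{(ii)}$ --- two principal curvatures $\Rightarrow$ rotation hypersurface $\Rightarrow$ profile ODE, plus Otsuki's classification --- never gets off the ground in the minimal case. What is actually needed there is the classification of minimal hypersurfaces in space forms attaining equality in the refined Kato inequality; this is a nontrivial theorem in its own right (due to Tam--Zhou for $c\leq0$, yielding the catenoid, and to Wang for $c>0$, yielding the Clifford tori and Otsuki hypersurfaces), and the paper disposes of the minimal case by citing exactly those results. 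For $H\neq0$ your outline is sound, since there the Okumura equality is genuinely forced at points where $\phi\neq0$; but note that once the multiplicities $(1,n-1)$ are known, the paper simply invokes do Carmo--Dajczer's theorem that a constant mean curvature hypersurface with two principal curvatures, one of them simple, is rotational (hence Delaunay), so your extra steps --- extracting integrability of the principal distributions from Kato equality and solving the profile ODE by hand --- are not needed and would add substantial, avoidable work.
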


\begin{proof}
	Let $\{e_1, \ldots, e_n\}$ be an orthonormal frame of $\mathrm{T} \Sigma$ which diagonalizes $\phi$ at each point of $\Sigma$, i.e., $\phi e_i = \sigma_i e_i,$ $i = 1, \ldots, n$. Then (cf.  \cite[Equation 2.8]{cheng1977hypersurfaces})
	\begin{equation}\label{laplaphi}
		\frac{1}{2} \Delta_\Sigma |\phi|^2 = |\nabla_\Sigma \phi|^2 + \frac{1}{2} \sum_{i,j = 1}^n R_{ij} (\sigma_i - \sigma_j)^2,
	\end{equation}
	where $R_{ij}$ is the sectional curvature of $\Sigma$ along the plane generated by $\{e_i, e_j\}.$ If $\kappa_1, \ldots, \kappa_n$ denote the principal curvatures of $\Sigma,$ we have $\sigma_i = \kappa_i - H/n,$ and then the Gauss Equation implies that
	\begin{align*}
		R_{ij} &= \kappa_i \kappa_j+c = \left(\sigma_i + \frac{H}{n}\right) \left(\sigma_j + \frac{H}{n}\right)+c\\
		&= \sigma_i \sigma_j + \frac{H}{n} (\sigma_i + \sigma_j) + \frac{H^2}{n^2} + c.
	\end{align*}
	Moreover, since $\tr \phi = 0,$ by a direct computation, we get
	\[\sum_{i, j = 1}^n\sigma_i \sigma_j (\sigma_i - \sigma_j)^2 = \sum_{i, j = 1}^n\left(\sigma_i^3 \sigma_j - 2 \sigma_i^2 \sigma_j^2 + \sigma_i \sigma_j^3\right) = - 2|\phi|^4,\]	
	\[\sum_{i, j = 1}^n (\sigma_i + \sigma_j) (\sigma_i - \sigma_j)^2 = \sum_{i, j = 1}^n \left(\sigma_i^3 - 2\sigma_i^2 \sigma_j + \sigma_i \sigma_j^2 + \sigma_i^2 \sigma_j - 2 \sigma_i \sigma_j^2 + \sigma_j^3\right) = 2n \sum_{i = 1}^n \sigma_i^3,\]
	\[\sum_{i, j = 1}^n (\sigma_i - \sigma_j)^2 = \sum_{i, j = 1}^n \left(\sigma_i^2 - 2 \sigma_i \sigma_j + \sigma_j^2\right) = 2n |\phi|^2.\]
	Thus, by Kato Inequality, equation \eqref{laplaphi} becomes
	\begin{equation}\label{laplaphi2}
		\begin{aligned}
			\frac{1}{2} \Delta_\Sigma |\phi|^2 &= |\nabla_\Sigma \phi|^2 - |\phi|^4 + H \sum_{i = 1}^n \sigma_i^3 + \frac{H^2}{n} |\phi|^2+nc|\phi|^2\\
			&\geq \frac{n + 2}{n}|\nabla_\Sigma |\phi||^2 - |\phi|^4 + H \sum_{i = 1}^n \sigma_i^3 + \frac{H^2}{n} |\phi|^2+nc|\phi|^2,
		\end{aligned}
	\end{equation}
	We observe that, by Okumura Lemma,
	\[- \frac{n - 2}{\sqrt{n(n - 1)}} |\phi|^3\leq\sum_{i = 1}^n \sigma_i^3 \leq \frac{n - 2}{\sqrt{n(n - 1)}} |\phi|^3,\]
    and, consequently, inequality \eqref{laplaphi2} may be rewritten as
	\[\frac{1}{2} \Delta_\Sigma |\phi|^2 \geq \frac{n + 2}{n} |\nabla_\Sigma |\phi||^2 - |\phi|^2 \left(|\phi|^2 + \frac{n - 2}{\sqrt{n(n - 1)}} |H||\phi| - \frac{H^2}{n} - nc\right).\]
	
	We suppose that $n \geq 3$ and the equality in \eqref{simon} for $\phi$ not identically zero in $\Sigma$. The minimal case is characterised as a catenoid for $c \leq 0,$ and either as a Clifford torus or an Otsuki minimal hypersurface for $c > 0$ (cf. \cite[Theorem 3.1]{tam2009stability}, \cite[Theorem 1.1]{wang2018simons}). If $H \neq 0,$ since the equality also occurs in Okumura Inequality, we have that one of the principal curvatures has multiplicity $1$ and the others have multiplicity $n - 1,$ and consequently $\Sigma$ is a rotational hypersurface (cf. \cite[Theorem 4.2]{carmo1983rotation}).
\end{proof}

Another important result to our proposes is a Hardy-type integral inequality proved in \cite{batista2017hardy}. In general, this inequality holds for compact submanifolds (possibly with non-empty boundary) in Hadamard manifolds. Here, we will consider only the case where the ambient manifold is a space form with non-positive sectional curvature.

Let $\Sigma^n,$ $n \geq 3,$ be a compact free boundary hypersurface in a unit ball $B$ in the space form $\mathbb{M}_c^{n + 1},$ $c \leq 0.$ We denote by $r$ the distance function of $\mathbb{M}_c^{n +1}$ to the center of $B.$ Since $r \leq 1$ on $\Sigma$ and $r = 1$ on $\partial \Sigma,$ if we chose $p = 2$ and $\gamma = 0$ in \cite[Theorem 3.2]{batista2017hardy}, by a direct computation, we get the following:

\begin{lemma}[Batista--Mirandola--Vitório, \cite{batista2017hardy}]\label{bmv2}
	Let $\Sigma^n,$ $n \geq 3,$ be a free boundary hypersurface in a unit ball $B$ in the space form $\mathbb{M}_c^{n + 1},$ $c \leq 0$. If $\Sigma$ has constant mean curvature $H$ and $f \in C^1(\Sigma)$ is a non-negative function, we have
	\[\int_\Sigma f^2 \leq \frac{4}{n^2} \int_\Sigma |\nabla_\Sigma f|^2 + \frac{H^2}{n^2} \int_\Sigma f^2 + \frac{2}{n} \int_{\partial \Sigma} f^2.\]
	Moreover, the equality occurs if, and only if, $f$ vanishes on $\Sigma.$
\end{lemma}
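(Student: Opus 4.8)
The plan is to deduce the inequality directly from the general weighted Hardy inequality \cite[Theorem 3.2]{batista2017hardy}, specialized to the exponent $p = 2$ and the weight parameter $\gamma = 0$, and then to convert the weighted integrals appearing there into the unweighted ones of the statement using only the position of $\Sigma$ inside $B$. With $p = 2$ and $\gamma = 0$, the abstract inequality has four ingredients: a Hardy term proportional to $\int_\Sigma f^2/r^2$, a Dirichlet term $\int_\Sigma |\nabla_\Sigma f|^2$, a curvature term built from the mean curvature vector, and a boundary term which is an integral of $f^2$ over $\partial\Sigma$ against a power of $r$. First I would record, from the constant mean curvature hypothesis, that the curvature term collapses to a multiple of $\int_\Sigma f^2 H^2$, and I would track the numerical constants so that they come out as $4/n^2$, $H^2/n^2$ and $2/n$.

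The geometric reduction is where the two hypotheses on $r$ enter. Because $\intt(\Sigma)\subset\intt(B)$ and $\partial\Sigma\subset\partial B$, we have $r\le 1$ on $\Sigma$ and $r\equiv 1$ on $\partial\Sigma$. The first fact gives $1/r^2\ge 1$, hence $\int_\Sigma f^2\le\int_\Sigma f^2/r^2$, which is exactly what replaces the weighted Hardy term by the plain $\int_\Sigma f^2$ on the left-hand side. The second fact makes the weight on the boundary integral identically equal to $1$, turning the boundary term into $\frac{2}{n}\int_{\partial\Sigma}f^2$. Here the \emph{free boundary} condition is essential: it forces the outward conormal $\nu$ of $\partial\Sigma$ in $\Sigma$ to coincide with the radial direction $\partial_r=\nabla r$ along $\partial\Sigma$, which is what makes the boundary flux reduce to a clean integral of $f^2$ with the correct sign.

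If one prefers a self-contained argument rather than a citation, I would reprove the required inequality by the divergence theorem on $\Sigma$ applied to a radial field of the form $Z=f^2\,h(r)\,\partial_r$. The tangential divergence of $\partial_r$ is governed by the Hessian of the distance function, $\divv_\Sigma\partial_r=\frac{\mathrm{sn}_c'(r)}{\mathrm{sn}_c(r)}\bigl(n-|(\partial_r)^\top|^2\bigr)$, and the hypothesis $c\le 0$ enters through the comparison $\mathrm{sn}_c'(r)/\mathrm{sn}_c(r)\ge 1/r$, which orients the resulting inequality correctly. Expanding $\divv_\Sigma Z$ produces the Hardy and Dirichlet terms, the cross term $2f\,h\,\langle\nabla_\Sigma f,(\partial_r)^\top\rangle$ is absorbed by Young's inequality, and the divergence theorem supplies the boundary flux $\int_{\partial\Sigma}f^2 h\,\langle\partial_r,\nu\rangle$ together with the mean-curvature term $\int_\Sigma f^2 h\,\langle\partial_r,\vec H\rangle$; the free boundary condition again identifies $\nu$ with $\partial_r$ on $\partial\Sigma$. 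Choosing the optimal Young constant is what produces the factor $4/n^2$.

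The equality case is the cleanest part and I would isolate it at the single estimate $\int_\Sigma f^2\le\int_\Sigma f^2/r^2$: equality there forces $f^2(1/r^2-1)\equiv 0$, hence $f\equiv 0$ wherever $r<1$; since $r<1$ on all of $\intt(\Sigma)$, continuity gives $f\equiv 0$ on $\Sigma$. The main obstacle is not conceptual but bookkeeping: correctly reading off the precise form of the curvature and boundary terms of \cite[Theorem 3.2]{batista2017hardy} under the substitution $p=2$, $\gamma=0$, and verifying that the constants assemble into exactly $4/n^2$, $H^2/n^2$ and $2/n$. In the self-contained route, the same difficulty reappears as the correct choice of the radial profile $h$ and of the Young parameter.
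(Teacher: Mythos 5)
Your proposal takes essentially the same route as the paper: the paper's entire ``proof'' of this lemma is precisely to specialize \cite[Theorem 3.2]{batista2017hardy} to $p=2$, $\gamma=0$ and to use $r\le 1$ on $\Sigma$ together with $r=1$ on $\partial\Sigma$, leaving the rest as a direct computation, which is exactly your primary argument. Your additional material --- the equality analysis via the slack in $\int_\Sigma f^2\le\int_\Sigma f^2/r^2$ (equality forces $f\equiv 0$ on $\intt(\Sigma)$, hence on $\Sigma$ by continuity) and the optional self-contained divergence-theorem derivation --- is consistent with this and goes slightly beyond what the paper itself records.
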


Given a compact hypersurface $\Sigma$ into a ball $B$ in a Riemannian manifold $M$ such that $\partial \Sigma \neq \emptyset,$ $\intt(\Sigma) \subset \intt(B)$ and $\partial \Sigma \subset \partial B,$ we can restrict the geometric quantities of $\Sigma$ to $\partial \Sigma$ and see $\partial \Sigma$ as a hypersurface in $\partial B.$ Since our objective is the study of free boundary hypersurfaces in balls in space forms, it is important for our results to consider the space forms as conformally flat manifolds.


Let $B_R$ be the open Euclidean ball with radius $0 < R \leq \infty$ centered at the origin of $\mathbb{R}^{n + 1}$ and $\varphi: B_R \to \mathbb{R}$ be a radial function, meaning that there exists a smooth function $u: [0, R^2) \to \mathbb{R}$ such that $\varphi(x) = u(|x|^2).$ Notice that
\[\nabla \varphi = 2 u'(|x|^2)x,\]
where $\nabla \varphi$ denotes the gradient of $\varphi.$ We consider the metric $\widehat{g} = e^{2\varphi} g$, where $g$ is the Euclidean metric, and write $M = (B_R, \widehat{g}).$ Let $\nabla$ and $\widehat{\nabla}$ denote the Riemannian connections of $g$ and $\widehat{g},$ respectively. For all vector field $X\in\mathfrak{X}(B_R)$, we have that
\begin{align}\label{conforme}
	\nonumber \widehat{\nabla}_X x &= \nabla_X x + g(\nabla \varphi, X) x + g(\nabla \varphi, x) X - g(x, X) \nabla \varphi\\
	&= X + 2 u'(|x|^2) \big(g(x, X)x + |x|^2 X - g(x, X) x\big)\\
	\nonumber &= \mu X,
\end{align}
where $| \cdot |$ denotes the norm induced by $g$ and $\mu(x) = 1 + 2 u'(|x|^2)|x|^2$ is the {\it potential function} of $x.$

\begin{example}[Space Forms]\label{formasespaciais}
    Let $c\in\mathbb{R}$ and consider the function $\varphi_c: B_R \to \mathbb{R}$ defined by
    \[\varphi_c(x) = \log \frac{1}{1 + \frac{c}{4} |x|^2},\]
    where $R = \infty$ if $c \in \{0, 1\},$ and $R = 2$ if $c = - 1$. Then $\mathbb{R}^{n + 1} = (\mathbb{R}^{n + 1},  e^{2 \varphi_0}g)$ is the Euclidean space endowed with the standard Euclidean metric, $\mathbb{S}^{n + 1} \setminus \{p\} = (\mathbb{R}^{n + 1}, e^{2 \varphi_1} g)$ is the sphere with constant sectional curvature $1$ minus the pole $p$ and $\mathbb{H}^{n + 1} = (B_2, e^{2 \varphi_{- 1}} g)$ is the hyperbolic space with constant sectional curvature $- 1.$
\end{example}


\section{The Key Lemma}\label{Stahl}

From now on, we fix the space form $\mathbb{M}_c^{n + 1}= (B_R, \widehat{g})$ with $\widehat{g} = e^{2 \varphi_c} g$ and $c \in \{- 1, 0, 1\},$ as in the Example \ref{formasespaciais}. The following construction is inspired in the work of L. Al\'ias, J. de Lira and J. Miguel Malacarne (cf. \cite{alias2006}).

Let $r < R$ be a positive number chosen such that the closed ball $B = (\overline{B}_r, \widehat{g})$ has radius 1 in $\mathbb{M}_c^{n + 1},$ where $\overline{B}_r$ is the closed Euclidean ball centered at the origin of $\mathbb{R}^{n + 1}$ with radius $r.$ We consider a compact oriented smooth hypersurface $\Sigma$ into $B$ such that $\partial \Sigma$ is non-empty, $\intt(\Sigma) \subset \intt(B)$ and $\partial\Sigma \subset \partial B.$ If, respectively, $\alpha^{M}_N$ and $A^{M}_N$ denote the second fundamental form and the Weingarten operator of $N$ as a submanifold of the Riemannian manifold $M$, as particular cases we set $\alpha = \alpha^{\mathbb{M}_c^{n + 1}}_\Sigma,$ $A = A^{\mathbb{M}_c^{n + 1}}_\Sigma,$ $\widetilde{\alpha} = \alpha^{\partial B}_{\partial \Sigma}$ and $\widetilde{A} = A^{\partial B}_{\partial \Sigma}.$ Under those notations, for all vector fields $X,Y\in \mathfrak{X}(\partial\Sigma),$ we have that

\begin{align*}
	\widehat{\nabla}_X Y &= \nabla^{\partial B}_X Y + \alpha^{B}_{\partial B}(X,Y)\\
	&= \nabla^{\partial \Sigma}_X Y + \widetilde{\alpha}(X,Y) + \alpha^{B}_{\partial B}(X,Y)
\end{align*}
and
\begin{align*}
	\widehat{\nabla}_X Y &= \nabla^\Sigma_X Y + \alpha(X,Y)\\
	&= \nabla^{\partial \Sigma}_X Y + \alpha_{\partial\Sigma}^\Sigma(X,Y) + \alpha(X,Y),
\end{align*}
where $\nabla^{\partial B},$ $\nabla^{\partial \Sigma}$ and $\nabla^\Sigma$ are, respectively, the Riemannian connections of $\partial B,$ $\partial \Sigma$ and $\Sigma$ as submanifolds in $\mathbb{M}_c^{n + 1}$. Thus
\begin{equation}\label{alpha}
	\alpha^{\mathbb{M}_c^{n + 1}}_{\partial B}(X,Y) + \widetilde{\alpha}(X,Y) = \alpha_{\partial\Sigma}^\Sigma(X,Y) + \alpha(X,Y)
\end{equation}
along $\partial \Sigma.$
Since $\partial B$ is a sphere in $\mathbb{M}_c^{n + 1},$ there is $\kappa \in \mathbb{R}$ such that $A^{\mathbb{M}_c^{n + 1}}_{\partial B} = \kappa I,$ where $I$ is the identity tensor in $\mathrm{T}(\partial B).$ Therefore, if we denote $\widehat{g} = \langle\ ,\ \rangle,$ since $\langle x, x \rangle = 1,$ by \eqref{conforme},
\begin{equation}\label{curvaturaesfera}
	\kappa = \langle A^{\mathbb{M}_c^{n + 1}}_{\partial B} e_i, e_i \rangle = - \langle \left(\widehat{\nabla}_{e_i} x\right)^\top, e_i \rangle = - \mu_0,
\end{equation}	
where $\mu_0 = 1 + 2 u'(r^2)r^2$ and $(\, \cdot\, )^\top$ denotes the orthogonal projection of an arbitrary vector field of $\mathfrak{X} (\mathbb{M}_c^{n + 1})$ onto $\mathfrak{X}(\partial B).$ Note that $\mu_0>0$. Then, by \eqref{alpha}
\begin{equation}\label{AemdSigma}
	- \mu_0 \langle X, Y \rangle x + \langle \widetilde{A} X, Y \rangle \xi = \langle A_{\partial\Sigma}^\Sigma X, Y \rangle \nu + \langle A X,Y \rangle \eta,
\end{equation}
where $x$ is the position vector on $\overline{B}_r$, $\xi \in \mathfrak{X}(\partial B)$ is a unit conormal vector field along $\partial\Sigma$ as a hypersurface in $M,$ $\nu$ is the unit conormal vector field along $\partial\Sigma$ as a hypersurface in $\Sigma$ pointing outward and $\eta$ is a globally unit normal vector field defined on $\Sigma.$

\begin{lemma}\label{dSigma}
	Let $\Sigma^n$ be a free boundary hypersurface in the unit ball $B$ described above. If $\Sigma$ has constant mean curvature $H$, at the points of $\partial \Sigma$ we have
	\begin{itemize}
		\item[$\mathrm{(i)}$] $\displaystyle |\phi|^2 = |\widetilde{A}|^2 + (H - \widetilde{H})^2 - \frac{H^2}{n}$;
		\item[$\mathrm{(ii)}$] $\nu(|\phi|^2) = - 2 \mu_0 \left(|\widetilde{A}|^2 + (n + 1)(H - \widetilde{H})^2 - 2 H (H - \widetilde{H})\right)$,
	\end{itemize}
	where $\phi$ is the umbilicity tensor of $\Sigma,$ while $\widetilde{A}$ and $\widetilde{H}$ are, respectively, the Weingarten operator and the mean curvature of $\partial \Sigma$ as a hypersurface in $\partial B$.
\end{lemma}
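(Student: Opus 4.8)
The plan is to use the free boundary condition to fix the orientations appearing in \eqref{AemdSigma}, thereby pinning down the full structure of the Weingarten operator $A$ of $\Sigma$ along $\partial\Sigma$, and then to read off the two identities; the second one will require an extra argument because it involves a derivative transverse to $\partial\Sigma$. Since $\Sigma$ meets $\partial B$ orthogonally, along $\partial\Sigma$ the outward conormal of $\partial\Sigma$ in $\Sigma$ is the position vector, $\nu = x$, and the unit normal of $\Sigma$ coincides with the conormal of $\partial\Sigma$ in $\partial B$, $\eta = \xi$. Substituting this into \eqref{AemdSigma} and comparing components along the orthogonal directions $x$ and $\eta$ gives, for all $X, Y \in \mathfrak{X}(\partial\Sigma)$,
\[
\langle A^\Sigma_{\partial\Sigma} X, Y\rangle = -\mu_0\langle X, Y\rangle, \qquad \langle \widetilde{A} X, Y\rangle = \langle A X, Y\rangle.
\]
The first identity says $\partial\Sigma$ is totally umbilical in $\Sigma$, whence $\nabla^\Sigma_X\nu = \mu_0 X$ for $X$ tangent to $\partial\Sigma$; the second says that $A$ restricted to $T(\partial\Sigma)$ equals $\widetilde{A}$.

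To complete the description of $A$ I would show that $\nu$ is a principal direction. Differentiating the orthogonality relation $\langle\eta, x\rangle = 0$ in a direction $X \in \mathfrak{X}(\partial\Sigma)$ and using $\widehat\nabla_X x = \mu_0 X$ from \eqref{conforme} together with $\widehat\nabla_X\eta = -AX$ yields $\langle A X, \nu\rangle = 0$, so $A\nu = a\nu$ for $a = \langle A\nu,\nu\rangle$. Taking the trace of $A$ and recalling $H = \tr A$ then forces $a = H - \widetilde{H}$. Hence, in a frame $\{e_1,\dots,e_{n-1},\nu\}$ adapted to $\partial\Sigma\subset\Sigma$, the operator $A$ is block diagonal: it equals $\widetilde{A}$ on $T(\partial\Sigma)$ and $H - \widetilde{H}$ along $\nu$. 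Identity $\mathrm{(i)}$ is now immediate, since $H$ constant gives $|\phi|^2 = |A|^2 - H^2/n$, while the block form gives $|A|^2 = |\widetilde{A}|^2 + (H - \widetilde{H})^2$.

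Identity $\mathrm{(ii)}$ is the step I expect to be the main obstacle. Because $H$ is constant, $\nu(|\phi|^2) = \nu(|A|^2) = 2\langle \nabla^\Sigma_\nu A, A\rangle$, where $\nabla^\Sigma_\nu A$ is the covariant derivative of the shape operator. The difficulty is that $A\nu = a\nu$ holds only along $\partial\Sigma$, so the transverse derivative $(\nabla^\Sigma_\nu A)\nu$ is not accessible by direct differentiation. To get around this I would invoke the Codazzi equation in the space form, $(\nabla^\Sigma_\nu A)e_i = (\nabla^\Sigma_{e_i}A)\nu$, which trades the transverse derivative for tangential ones. Writing $\langle\nabla^\Sigma_\nu A, A\rangle = \sum_{i=1}^{n}\langle(\nabla^\Sigma_{e_i}A)\nu, Ae_i\rangle$, the tangential directions $i \le n-1$ are handled by expanding
\[
(\nabla^\Sigma_{e_i}A)\nu = \nabla^\Sigma_{e_i}(a\nu) - A(\nabla^\Sigma_{e_i}\nu) = (e_i a)\nu + a\mu_0 e_i - \mu_0\widetilde{A}e_i,
\]
using $\nabla^\Sigma_{e_i}\nu = \mu_0 e_i$ and $Ae_i = \widetilde{A}e_i$. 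For the remaining normal term $\langle(\nabla^\Sigma_\nu A)\nu, A\nu\rangle = a\langle(\nabla^\Sigma_\nu A)\nu, \nu\rangle$, I would exploit that $H = \tr A$ is constant, so $\tr(\nabla^\Sigma_\nu A) = 0$; combined with Codazzi this rewrites $\langle(\nabla^\Sigma_\nu A)\nu, \nu\rangle$ purely in terms of the tangential data already computed. Assembling the terms and substituting $a = H - \widetilde{H}$ produces
\[
\nu(|\phi|^2) = -2\mu_0\big(|\widetilde{A}|^2 + (n+1)(H - \widetilde{H})^2 - 2H(H - \widetilde{H})\big),
\]
which is $\mathrm{(ii)}$.
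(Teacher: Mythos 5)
Your proof of part (i) is essentially the paper's own: both use the free boundary identifications $\nu = x$, $\eta = \xi$ in \eqref{AemdSigma} to conclude $A = \widetilde{A}$ on $\mathrm{T}(\partial\Sigma)$, and both show $\nu$ is a principal direction by differentiating the orthogonality relation $\langle \eta, x\rangle = 0$ along $\partial\Sigma$ (the paper phrases this via the support function $f = \langle x, \eta\rangle$, which is the same computation). For part (ii), however, you take a genuinely different route. The paper invokes Stahl's result \cite[Theorem 2.4]{stahl1996convergence} as a black box, which gives $\nu(\kappa_i) = \mu_0(\kappa_n - \kappa_i)$ for $i = 1, \ldots, n-1$, and then computes $\nu(|A|^2) = 2\sum_{i=1}^{n-1}\kappa_i\nu(\kappa_i) - 2\kappa_n\sum_{i=1}^{n-1}\nu(\kappa_i)$. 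You instead derive the transverse derivative from first principles: the Codazzi equation trades $(\nabla^\Sigma_\nu A)e_i$ for $(\nabla^\Sigma_{e_i}A)\nu$, which is computable purely from data along $\partial\Sigma$ (namely $A\nu = a\nu$, $Ae_i = \widetilde{A}e_i$ and $\nabla^\Sigma_{e_i}\nu = \mu_0 e_i$), while the remaining normal component $\langle(\nabla^\Sigma_\nu A)\nu, \nu\rangle$ is recovered from $\tr(\nabla^\Sigma_\nu A) = \nu(H) = 0$. I verified the assembly: summing gives $\langle\nabla^\Sigma_\nu A, A\rangle = -\mu_0\bigl(|\widetilde{A}|^2 + (n-1)a^2 - 2a\widetilde{H}\bigr)$, and substituting $a = H - \widetilde{H}$, $\widetilde{H} = H - a$ yields exactly the stated identity, so your argument is correct. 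What your approach buys is self-containedness — it amounts to a proof of the relevant case of Stahl's formula rather than a citation — and it works at the level of the tensor $\nabla^\Sigma_\nu A$, thereby avoiding any differentiation of the principal curvatures themselves (which need not be smooth where eigenvalues cross, a point the paper's formulation glosses over by relying on Stahl). What the paper's route buys is brevity, at the cost of the external reference.
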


\begin{proof}
	(i) Since $\Sigma$ is free boundary in $B$, we have that $x = \nu$ and $\xi = \eta$ on $\partial\Sigma.$ Thus, by equation \eqref{AemdSigma}, $A = \widetilde{A}$ along $\partial\Sigma$.
	Therefore, if $\{e_1, \ldots, e_{n-1}\}$ is an orthonormal frame on $\partial\Sigma$ given by eigenvectors of $\widetilde{A},$ and $\widetilde{\kappa}_1, \ldots, \widetilde{\kappa}_{n-1}$ are its respective eigenvalues, the matrix of $A$ in the orthonormal frame $\{e_1, \ldots, e_{n-1}, \nu\}$ is given by
	\[A=\left(
	\begin{array}{ccccc}
		\widetilde{\kappa}_1 & 0 & \cdots & 0 & \langle A\nu,e_1\rangle \\
		0 & \widetilde{\kappa}_2 & \cdots & 0 & \langle A\nu,e_2\rangle \\
		\vdots & \vdots & \ddots & \vdots & \vdots \\
		0 & 0 & \cdots & \widetilde{\kappa}_{n-1} & \langle A\nu,e_{n-1}\rangle \\
		\langle A\nu,e_1\rangle & \langle A\nu,e_2\rangle & \cdots & \langle A\nu,e_{n-1}\rangle & \langle A\nu,\nu\rangle \\
	\end{array}
	\right).\]
	Now, we consider the support function $f(x) = \langle x, \eta \rangle,$ $x \in \Sigma.$ Given $X \in \mathfrak{X}(\Sigma)$, by \eqref{conforme} we have $X(f) = \langle x, \widehat{\nabla}_X \eta \rangle.$ In particular, since $\Sigma$ is free boundary in $B$, for each $X \in \mathfrak{X}(\partial\Sigma),$
	\begin{align*}
		0 = \langle \nu, \widehat{\nabla}_X \eta \rangle = - \langle \nu, AX \rangle.
	\end{align*}
	Thus, along $\partial \Sigma,$ $\nu$ is a principal direction of $\Sigma$ and the orthonormal frame $\{e_1, \ldots, e_{n - 1}, \nu\}$ diagonalizes $A.$ Therefore
	\[A = \left(
	\begin{array}{cc}
	\widetilde{A} & 0\\
	0 & \kappa_n
	\end{array}
	\right),\]
	where $\kappa_n = \langle A \nu, \nu \rangle.$ Moreover, since $H = \widetilde{H} + \kappa_n,$ we obtain that $\kappa_n^2 = (H - \widetilde{H})^2,$ and
	\begin{align*}
	    |\phi|^2 &= |A|^2 - \frac{H^2}{n}\\
	    &= |\widetilde{A}|^2 + (H - \widetilde{H})^2 - \frac{H^2}{n}.
	\end{align*}
	
	\noindent (ii) Since $H$ is constant, we have
	\[\nu(|\phi|^2) = \nu(|A|^2) = 2\sum_{i = 1}^{n - 1} \kappa_i \nu(\kappa_i) - 2 \kappa_n \sum_{i = 1}^{n - 1}\nu(\kappa_i).\]
	Note that \cite[Theorem 2.4]{stahl1996convergence} holds for hypersurfaces in an arbitrary space forms. Thus, since $A$ is a Codazzi tensor on $\Sigma,$ by \eqref{curvaturaesfera} we have that $\nu(\kappa_i) = \mu_0(\kappa_n - \kappa_i)$ for $i = 1, \ldots, n - 1,$ and consequently
	\begin{align*}
		\nu(|\phi|^2) &= 2 \mu_0\left((H - \kappa_n)\kappa_n - |A|^2 + \kappa_n^2 - (n - 1) \kappa_n^2 + (H - \kappa_n)\kappa_n\right)\\
		&= - 2\mu_0\left(|\widetilde{A}|^2 + (n + 1)(H - \widetilde{H})^2 - 2H(H - \widetilde{H})\right).
	\end{align*}
\end{proof}

\begin{remark}\label{obsdSigma}
Using the relation $|\widetilde{A}|^2 = |\widetilde{\phi}|^2 + \widetilde{H}^2/(n - 1)$ in Lemma \ref{dSigma}, by a direct computation, we get
\begin{equation}\label{obsdSigma2}
    \begin{aligned}
    |\phi|^2 &= |\widetilde{\phi}|^2 + \frac{1}{n(n - 1)} \left((n - 1)H - n \widetilde{H}\right)^2\\
    \nu(|\phi|^2) &= - 2 \mu_0 \left(|\widetilde{\phi}|^2 + \frac{1}{n - 1} \left((n - 1) H - n \widetilde{H}\right)^2\right),
\end{aligned}
\end{equation}
where $\widetilde{\phi}$ is the umbilicity tensor of $\partial \Sigma$ as a hypersurface of $\partial B.$ In particular, $\nu(|\phi|^2) \leq 0$.
\end{remark}

Note that, as a consequence of Remark \ref{obsdSigma}, we have the following.

\begin{corollary}\label{subharmonica}
	Let $\Sigma^n$ be a free boundary hypersurface in the unit ball $B.$ If $\Sigma$ has constant mean curvature and $\Delta_\Sigma|\phi|^2 \geq 0,$ where $\phi$ is the umbilicity tensor of $\Sigma,$ then each connected component of $\partial \Sigma$ is a totally umbilical hypersurface of $\partial B$ with constant mean curvature.
\end{corollary}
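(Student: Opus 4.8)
The plan is to combine the subharmonicity hypothesis with the boundary information packaged in Remark \ref{obsdSigma} through a single integration-by-parts argument. Since $|\phi|^2 = \langle \phi, \phi \rangle$ is a genuinely smooth function on the compact manifold $\Sigma$ (no regularity issue arising from the square root of $|\phi|^2$), I would first integrate the inequality $\Delta_\Sigma |\phi|^2 \geq 0$ over $\Sigma$ and apply the divergence theorem. With $\nu$ the outward unit conormal to $\partial \Sigma$ in $\Sigma$, this yields
\[
0 \leq \int_\Sigma \Delta_\Sigma |\phi|^2 = \int_{\partial \Sigma} \nu(|\phi|^2).
\]

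The decisive step is then a sign squeeze. Remark \ref{obsdSigma} supplies the pointwise inequality $\nu(|\phi|^2) \leq 0$ along $\partial \Sigma$, so the boundary integral above is simultaneously $\geq 0$ and $\leq 0$, hence vanishes. Because the integrand has a fixed sign, the vanishing of its integral upgrades to pointwise vanishing, forcing $\nu(|\phi|^2) = 0$ at every point of $\partial \Sigma$.

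Finally, I would read off the geometric conclusion from the explicit formula
\[
\nu(|\phi|^2) = - 2 \mu_0 \left(|\widetilde{\phi}|^2 + \frac{1}{n - 1} \big((n - 1)H - n \widetilde{H}\big)^2\right)
\]
in \eqref{obsdSigma2}. Since $\mu_0 > 0$ and both summands inside the parentheses are non-negative, each must vanish identically on $\partial \Sigma$: the first gives $\widetilde{\phi} \equiv 0$, that is, $\partial \Sigma$ is totally umbilical in $\partial B$, while the second gives $\widetilde{H} = \tfrac{n - 1}{n} H$, which is constant because $H$ is constant. Restricting these pointwise identities to any connected component of $\partial \Sigma$ delivers the claim. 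I expect no serious obstacle: the entire argument is driven by the sign information already established in Remark \ref{obsdSigma}, and the only point requiring minor care is the passage from the vanishing of the boundary integral to pointwise vanishing, which is legitimate precisely because the integrand is everywhere non-positive.
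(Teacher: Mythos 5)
Your proposal is correct and follows exactly the paper's own argument: integrate the subharmonicity hypothesis, apply the divergence theorem, squeeze the boundary integral against the sign $\nu(|\phi|^2)\leq 0$ from Remark \ref{obsdSigma}, and then read off $\widetilde{\phi}\equiv 0$ and $n\widetilde{H}=(n-1)H$ pointwise from the explicit formula \eqref{obsdSigma2}. (Incidentally, your write-up even fixes a small typo in the paper, where the boundary integral is mistakenly written as $\int_\Sigma\nu(|\phi|^2)$ rather than $\int_{\partial\Sigma}\nu(|\phi|^2)$.)
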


\begin{proof}
Since $\Delta_\Sigma|\phi|^2 \geq 0$ and $\nu(|\phi|^2) \leq 0$, by the Divergence Theorem we have that
	\[0 \leq \int_\Sigma\Delta_\Sigma|\phi|^2 = \int_\Sigma\nu(|\phi|^2) \leq 0.\]
By \eqref{obsdSigma2}, we have that $\nu(|\phi|^2) = 0.$ Therefore $\widetilde{\phi} = 0$ and $n\widetilde{H} = (n-1)H$ along $\partial \Sigma.$
\end{proof}

\begin{remark}
	By Corollary \ref{subharmonica}, $\Sigma$ is a totally umbilical surface if $n=2,$ since $\phi = 0$ on $\partial \Sigma.$ Moreover, \cite[Proposition 1.6]{barbosa2018area} implies that $\Sigma$ is an equatorial disk if $H=0.$
\end{remark}

\section{Umbilical Points and Topology}\label{umbilical}

In this section, as an application of Lemma \ref{dSigma}, we will relate the geometry and the topology of free boundary surfaces with constant mean curvature in $B$.

Let $\Sigma^2$ be a Riemannian surface and $\varphi: \Sigma \to \R$ be a non-vanishing function such that $\varphi = |\psi| f,$ where $\psi$ is holomorphic and $f$ is smooth and positive. Given $p \in \Sigma$ a zero of $\varphi$, the ${\it multiplicity}$ of $\varphi(p)$ is the multiplicity of $\psi(p).$ The {\it order} of $\varphi$ in a subset $S\subset\Sigma$ is defined as the sum of the multiplicities of $\varphi$ in $S$. For this kind of function, we have the following lemma.

\begin{lemma}\label{laplaciano}
Given a compact Riemannian surface $\Sigma^2$, if $\nu(\varphi)\varphi^{- 1}$ is integrable along $\partial \Sigma$ then
	\[\int_\Sigma \Delta_\Sigma (\log \varphi) = - 2\pi \theta_\Sigma - \pi \theta_{\partial\Sigma} + \int_{\partial \Sigma} \nu(\varphi)\varphi^{- 1},\]
	where $\theta_\Sigma$ and $\theta_{\partial\Sigma}$ are the orders of $\varphi$ respectively in $\intt(\Sigma)$ and $\partial\Sigma.$
\end{lemma}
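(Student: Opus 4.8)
The plan is to regard $\Sigma$ as a Riemann surface via isothermal coordinates and to exploit that $\log\varphi$ splits into a harmonic piece with logarithmic singularities plus a smooth piece. Writing $\log\varphi = \log|\psi| + \log f$, the term $\log f$ is smooth since $f$ is smooth and positive, while locally $\log|\psi| = \mathrm{Re}(\log\psi)$ is harmonic away from the zeros of $\psi$; thus $\Delta_\Sigma(\log|\psi|) = 0$ off the zero set, and $\Delta_\Sigma(\log\varphi) = \Delta_\Sigma(\log f)$ at every point where $\psi \neq 0$. This already shows the left-hand integral is finite and makes it natural to interpret it as $\lim_{\epsilon \to 0}\int_{\Sigma_\epsilon}\Delta_\Sigma(\log\varphi)$ for the punctured domains $\Sigma_\epsilon$ below.

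Next I would excise small neighborhoods of the zero set of $\psi$: around each interior zero $p_j$ of multiplicity $m_j$ remove a disk $D_\epsilon(p_j)$ in an isothermal chart, and around each boundary zero $q_k$ of multiplicity $\ell_k$ remove a half-disk $D_\epsilon^+(q_k) = D_\epsilon(q_k) \cap \Sigma$, obtaining a surface $\Sigma_\epsilon$ on which $\log\varphi$ is smooth. The divergence theorem gives
\[\int_{\Sigma_\epsilon}\Delta_\Sigma(\log\varphi) = \int_{\partial\Sigma_\epsilon}\nu(\log\varphi),\]
where $\nu$ is the outward conormal of $\Sigma_\epsilon$ and $\nu(\log\varphi) = \nu(\varphi)\varphi^{-1}$. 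The boundary $\partial\Sigma_\epsilon$ decomposes into the truncated part of $\partial\Sigma$, the small circles $\partial D_\epsilon(p_j)$, and the small semicircles $\partial D_\epsilon^+(q_k) \cap \intt(\Sigma)$, the latter two traversed with conormal pointing toward the removed center.

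The heart of the argument is the limit $\epsilon \to 0$. I would use that the flux $\int_C \nu(h)\, ds$ of a function $h$ across a curve $C$ is conformally invariant, so each local contribution can be computed in the flat model of the isothermal coordinate. Around an interior zero, $\log\varphi = m_j \log|z| + (\text{smooth})$, and the normal derivative of the singular part (with conormal $-\partial_r$) integrated over a circle of radius $\epsilon$ gives $-m_j/\epsilon$ times the length $2\pi\epsilon$, hence $-2\pi m_j$, the smooth remainder being $O(\epsilon)$; summing over interior zeros yields $-2\pi\theta_\Sigma$. The identical computation over a semicircle of length $\pi\epsilon$ at a boundary zero produces $-\pi\ell_k$, summing to $-\pi\theta_{\partial\Sigma}$. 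The integrability hypothesis on $\nu(\varphi)\varphi^{-1}$ guarantees that the integral over the truncated part of $\partial\Sigma$ converges to $\int_{\partial\Sigma}\nu(\varphi)\varphi^{-1}$, and combining the three limits gives the stated identity.

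The \emph{main obstacle} is the careful treatment of the boundary zeros: one must check that excising half-disks produces exactly half the solid-angle contribution (the factor $\pi$ rather than $2\pi$), that the smooth remainders $\log|h|$ and $\log f$ contribute nothing over the shrinking semicircles, and that the truncated boundary integral converges, which is precisely where the integrability assumption enters. A secondary point is justifying that $\int_\Sigma \Delta_\Sigma(\log\varphi) = \lim_{\epsilon \to 0}\int_{\Sigma_\epsilon}\Delta_\Sigma(\log\varphi)$, which holds because $\Delta_\Sigma(\log\varphi)$ coincides with the bounded function $\Delta_\Sigma(\log f)$ off the finite zero set of $\psi$.
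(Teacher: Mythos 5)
Your proposal is correct and follows essentially the same route as the paper's proof: excise disks around interior zeros and half-disks around boundary zeros, apply the divergence theorem on the truncated surface, compute the local flux of the logarithmic singularity to get $-2\pi$ per interior multiplicity and $-\pi$ per boundary multiplicity (semicircle of length $\pi\epsilon$), and use the integrability hypothesis for the truncated boundary integral. Your two refinements---invoking conformal invariance of the flux instead of writing out the cancellation of the conformal factor $\lambda$, and justifying the limit $\int_{\Sigma}\Delta_\Sigma(\log\varphi)=\lim_{\epsilon\to 0}\int_{\Sigma_\epsilon}\Delta_\Sigma(\log\varphi)$ via the splitting $\log\varphi=\log|\psi|+\log f$ with $\log|\psi|$ harmonic off the zeros---are sound and only make explicit what the paper leaves implicit.
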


\begin{proof}
	Let $\{x_1, \ldots, x_\ell, x_{\ell+1}, \ldots, x_m\},$ $m \geq 0,$ be the set of points of $\Sigma$ such that $\varphi(x_i) = 0.$ We assume that $x_i\in\intt(\Sigma)$ for $i=1,\ldots,\ell$ and $x_i\in\partial\Sigma$ for $i=\ell+1,\ldots,m.$ Now, we consider a conformal coordinate $z_i$ around $x_i,$ with conformal factor $\lambda^2$, and we define
	\[\mathcal{B}_i(\delta) = \{x \in \Sigma\ :\ |z_i(x) - z_i(x_i)| < \delta\},\ \ i = 1, \ldots, m,\]
	such that $\mathcal{B}_i(\delta) \cap \mathcal{B}_j(\delta) = \emptyset$, for $i \neq j.$ We set
\[ \Sigma_\delta = \Sigma \setminus \left(\bigcup_{i = 1}^m \mathcal{B}_i(\delta)\right), \ \ \ M_\delta =\intt(\Sigma)\cap\left(\bigcup_{i=\ell+1}^m \partial\mathcal{B}_i(\delta)\right), \ \ \ N_\delta =\partial\Sigma\cap\left(\bigcup_{i=\ell+1}^m \partial\mathcal{B}_i(\delta)\right),\]
and we note that $\partial\Sigma_\delta = \left(\bigcup_{i = 1}^\ell \partial\mathcal{B}_i(\delta)\right) \cup M_\delta\cup \left(\partial \Sigma \setminus N_\delta\right)$.
Thus, by the Divergence Theorem,
	\begin{equation}\label{integral'}
		\int_{\Sigma_\delta} \Delta_\Sigma (\log \varphi)= \sum_{i = 1}^\ell \int_{\partial \mathcal{B}_i(\delta)} \nu_i(\log \varphi) + \sum_{i = \ell+1}^m\int_{M_\delta\cap\partial \mathcal{B}_i(\delta)}\nu_i(\log\varphi) + \int_{\partial \Sigma \setminus N_\delta} \nu(\varphi)\varphi^{- 1}.
	\end{equation}
	Let $r_i(x) = |z_i(x) - z_i(x_i)|$ and assume that $\varphi(x_i)$ has multiplicity $\theta_i.$ In this case, $\varphi$ takes the form $\varphi = r_i^{\theta_i} f_i$ on $\mathcal{B}_i(\delta)$ for some positive smooth function $f_i.$ Thus,
	\[\nu_i (\log \varphi) = - \frac{\theta_i}{\lambda} \frac{d}{dr_i}(\log r_i) + \nu_i (\log f_i) = - \frac{\theta_i}{\lambda r_i} + \nu_i (\log f_i).\]
	On one hand, for $x_i\in\intt(\Sigma),$ we get
	\begin{align*}
		\int_{\partial \mathcal{B}_i(\delta)} \nu_i(\log \varphi)  &= - \frac{\theta_i}{\delta} \int_{\partial \mathcal{B}_i(\delta)} \frac{1}{\lambda}  + \int_{\partial \mathcal{B}_i(\delta)} \nu_i(\log f_i) \\
		&= - \frac{\theta_i}{\delta} \int_{\{r_i = \delta\}} dz_i d \overline{z}_i + \int_{\partial \mathcal{B}_i(\delta)} \nu_i(\log f_i) \\
		&= - 2 \pi \theta_i + \int_{\partial \mathcal{B}_i(\delta)} \nu_i(\log f_i),
	\end{align*}
	and then, since $f_i > 0,$ we have that
	\begin{equation}\label{limiteinterior}
		\lim_{\delta \to 0} \int_{\partial \mathcal{B}_i(\delta)} \nu_i(\log \varphi) = -2\pi \theta_i.
	\end{equation}
	On the other hand, for $x_i\in\partial\Sigma,$ by the same argument used above, since the length of $\{r_i=\delta\}$ is approximate by $\pi\delta,$ we conclude that
	\begin{equation}\label{limitebordo}
		\lim_{\delta \to 0} \int_{M_\delta\cap\partial \mathcal{B}_i(\delta)} \nu_i(\log \varphi) = -\pi \theta_i.
	\end{equation}
	Consequently, replacing equations \eqref{limiteinterior} and \eqref{limitebordo} in equation \eqref{integral'}, since $\nu(\varphi)\varphi^{-1}$ is integrable,
	\[\int_\Sigma \Delta_\Sigma (\log \varphi) = - 2\pi \theta_\Sigma - \pi \theta_{\partial\Sigma} + \int_{\partial \Sigma} \nu(\varphi)\varphi^{- 1}.\]
\end{proof}

\begin{theorem}\label{umbilicaltheorem}
	Let $\Sigma^2$ be a compact free boundary surface in $B.$ If $\Sigma$ has constant mean curvature surface and is not a disk, then
	\[\chi(\Sigma) = -\frac{\theta_\Sigma}{2} - \frac{\theta_{\partial\Sigma}}{4},\]
	where $\theta_\Sigma$ and $\theta_{\partial\Sigma}$ are the orders of $|\phi|$ respectively in $\intt(\Sigma)$ and $\partial\Sigma.$
\end{theorem}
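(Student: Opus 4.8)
The plan is to apply Lemma \ref{laplaciano} to the function $\varphi=|\phi|$ and to match the resulting identity against the Gauss--Bonnet theorem, exploiting a cancellation of boundary terms. For this to be legitimate I first need $\phi$ not to be identically zero, which is exactly where the hypothesis that $\Sigma$ is not a disk enters: a totally umbilical free boundary surface in $B$ is an equatorial disk or a spherical cap, hence a disk, so $\Sigma$ not a disk forces $\phi\not\equiv0$. Granting this, I would invoke Hopf's classical fact that for a constant mean curvature surface in a space form the Hopf differential $\Phi=\psi\,dz^{2}$ (the $(2,0)$-part of the second fundamental form in a local conformal coordinate $z$) is holomorphic. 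A direct computation in conformal coordinates $ds^{2}=\lambda^{2}|dz|^{2}$ gives $|\phi|=c_{0}\lambda^{-2}|\psi|$ for a positive constant $c_{0}$, so $\varphi=|\phi|$ has the form $|\psi|f$ with $\psi$ holomorphic and $f=c_{0}\lambda^{-2}>0$ smooth; the umbilic points of $\Sigma$ are precisely the zeros of $\psi$ with the same multiplicities, so $\theta_{\Sigma}$ and $\theta_{\partial\Sigma}$ are the interior and boundary orders demanded by Lemma \ref{laplaciano}.

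The second step is to evaluate $\int_{\Sigma}\Delta_{\Sigma}(\log|\phi|)$. Away from the umbilic points, writing $\log|\phi|=\log|\psi|+\log c_{0}-2\log\lambda$ and using that $\log|\psi|$ is harmonic for the flat Laplacian $\Delta_{0}$, together with $K=-\lambda^{-2}\Delta_{0}\log\lambda$ and $\Delta_{\Sigma}=\lambda^{-2}\Delta_{0}$, yields the pointwise identity $\Delta_{\Sigma}(\log|\phi|)=2K$, where $K$ is the Gauss curvature of $\Sigma$. Integrating and applying the Gauss--Bonnet theorem for a surface with boundary,
\[\int_{\Sigma}\Delta_{\Sigma}(\log|\phi|)=2\int_{\Sigma}K=4\pi\chi(\Sigma)-2\int_{\partial\Sigma}k_{g},\]
where $k_{g}$ denotes the geodesic curvature of $\partial\Sigma$ in $\Sigma$.

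The third step controls the two boundary contributions and shows they cancel. Using the free boundary condition $\nu=x$ and the conformal identity \eqref{conforme}, differentiating $\langle T,x\rangle=0$ along a unit tangent $T$ to $\partial\Sigma$ gives $\langle\widehat{\nabla}_{T}T,x\rangle=-\mu_{0}$, whence $k_{g}=\mu_{0}$ is constant and $\int_{\partial\Sigma}k_{g}=\mu_{0}\,\mathrm{length}(\partial\Sigma)$. On the other hand, specialising Lemma \ref{dSigma}(ii) (equivalently Remark \ref{obsdSigma}) to $n=2$, where $|\widetilde{\phi}|\equiv0$ because $\partial\Sigma$ is a curve, gives $\nu(|\phi|^{2})=-4\mu_{0}|\phi|^{2}$, so $\nu(|\phi|)\,|\phi|^{-1}=-2\mu_{0}$ wherever $|\phi|\neq0$. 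Since this ratio is the constant $-2\mu_{0}$, it is bounded and therefore integrable across the boundary umbilic points, which legitimises Lemma \ref{laplaciano}; moreover $\int_{\partial\Sigma}\nu(|\phi|)|\phi|^{-1}=-2\mu_{0}\,\mathrm{length}(\partial\Sigma)=-2\int_{\partial\Sigma}k_{g}$.

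Substituting both evaluations into Lemma \ref{laplaciano},
\[4\pi\chi(\Sigma)-2\int_{\partial\Sigma}k_{g}=-2\pi\theta_{\Sigma}-\pi\theta_{\partial\Sigma}-2\int_{\partial\Sigma}k_{g},\]
the boundary-length terms cancel, and dividing by $4\pi$ gives $\chi(\Sigma)=-\theta_{\Sigma}/2-\theta_{\partial\Sigma}/4$. I expect the main obstacle to lie in the first step: verifying cleanly in the general space form setting that the Hopf differential is holomorphic (so that $|\phi|=|\psi|f$ with $\psi$ holomorphic), and confirming that $\nu(|\phi|)|\phi|^{-1}$ is genuinely integrable at the boundary umbilic points so that Lemma \ref{laplaciano} applies. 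The elegant cancellation of the geodesic-curvature term from Gauss--Bonnet against the conormal-derivative term furnished by Lemma \ref{dSigma} is then precisely what makes the identity emerge free of metric data.
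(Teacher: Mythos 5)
Your proposal is correct and follows essentially the same route as the paper: the identity $\Delta_\Sigma(\log|\phi|)=2K$ away from umbilics (which the paper imports from \cite[Theorem 0]{eschenburg1988constant} and you rederive via the holomorphic Hopf differential), Gauss--Bonnet with the constant geodesic curvature $\mu_0$ coming from the free boundary condition, the specialization of Lemma \ref{dSigma} to $n=2$ giving $\nu(|\phi|^2)=-4\mu_0|\phi|^2$, and the cancellation of the boundary terms inside Lemma \ref{laplaciano}. Your explicit justification that ``not a disk'' forces $\phi\not\equiv 0$, and that the constant ratio $\nu(|\phi|)|\phi|^{-1}=-2\mu_0$ gives the integrability required by Lemma \ref{laplaciano}, are details the paper leaves implicit but are handled correctly.
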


\begin{proof}
	On one hand, by \cite[Theorem 0]{eschenburg1988constant} and Gauss-Bonnet Theorem, since the geodesic curvature in the direction of $\nu$ of any connected component of $\partial \Sigma$ is equal $-\mu_0$ by \eqref{AemdSigma}, using the same notation in the proof of Lemma \ref{laplaciano},
	\begin{equation}\label{integral1}
		\begin{aligned}
			\int_\Sigma \Delta_\Sigma (\log |\phi|) &= \lim_{\delta \to 0} \int_{\Sigma_\delta} \Delta_\Sigma (\log |\phi|) = 2 \lim_{\delta \to 0} \int_{\Sigma_\delta} K\\
			&= 2 \int_\Sigma K = 4\pi \chi(\Sigma) - 2 \mu_0 |\partial \Sigma|.
		\end{aligned}
	\end{equation}
	On the other hand, by \eqref{obsdSigma2},
	\[\int_{\partial \Sigma} |\phi|^{- 2} \nu(|\phi|^2) d \sigma = - 4 \mu_0 |\partial \Sigma|.\]
	Consequently, since the set of umbilical points of $\Sigma$ on $\partial \Sigma$ is discrete, Lemma \ref{laplaciano} implies that
	\begin{equation}\label{integral2}
		\int_\Sigma \Delta_\Sigma (\log |\phi|) = - 2 \pi \theta_\Sigma - \pi \theta_{\partial\Sigma} - 2 \mu_0 |\partial \Sigma|.
	\end{equation}
	Combining \eqref{integral1} and \eqref{integral2}, we obtain $4 \chi(\Sigma) = - 2\theta_\Sigma - \theta_{\partial\Sigma}.$
\end{proof}

\begin{remark}
	A version of Theorem \ref{umbilicaltheorem} in the context of closed surfaces in space forms can be found at \cite[Equation (2)]{eschenburg1988constant}. Moreover, Theorem \ref{umbilicaltheorem} reobtain Nitsche Theorem for free boundary constant mean curvature surfaces in three-dimensional space forms balls.
\end{remark}

Once that the order of $|\phi|$ is a non-negative integer number, as an interesting application of Theorem \ref{umbilicaltheorem}, we have the following statement.

\begin{corollary}\label{corolarioumbilico}
	Let $\Sigma^2$ be a compact free boundary surface in $B.$ Admit that $\Sigma$ has constant mean curvature. Then $\Sigma$ is an annulus if, and only if, there are no umbilical points on $\Sigma.$
\end{corollary}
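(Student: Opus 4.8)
The plan is to apply Theorem \ref{umbilicaltheorem} directly, exploiting the fact that the orders $\theta_\Sigma$ and $\theta_{\partial\Sigma}$ are both non-negative integers and that the Euler characteristic of an annulus is zero. First I would dispose of the disk case, since Theorem \ref{umbilicaltheorem} explicitly excludes disks: if $\Sigma$ is a disk, then $\Sigma$ is simply connected with one boundary component, and by the Corollary \ref{subharmonica} circle of ideas (or directly by the Remark following Theorem \ref{umbilicaltheorem}, which recovers the Nitsche Theorem), a free boundary constant mean curvature disk in $B$ is totally umbilical, hence has umbilical points everywhere; so the equivalence holds vacuously or trivially in that case. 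I would note this at the outset so that the main argument proceeds under the hypothesis that $\Sigma$ is not a disk, allowing Theorem \ref{umbilicaltheorem} to be invoked.

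Next, assuming $\Sigma$ is not a disk, I would argue both implications from the identity $\chi(\Sigma) = -\theta_\Sigma/2 - \theta_{\partial\Sigma}/4$. For the forward direction, suppose $\Sigma$ is an annulus; then $\chi(\Sigma) = 0$, so the formula gives $2\theta_\Sigma + \theta_{\partial\Sigma} = 0$. Since $\theta_\Sigma \geq 0$ and $\theta_{\partial\Sigma} \geq 0$ are non-negative integers (the orders of $|\phi|$, a quantity of the type considered in Lemma \ref{laplaciano}), this forces $\theta_\Sigma = \theta_{\partial\Sigma} = 0$, meaning $|\phi|$ has no zeros in $\intt(\Sigma)$ nor on $\partial\Sigma$, i.e. there are no umbilical points on $\Sigma$.

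For the converse, suppose $\Sigma$ has no umbilical points. Then $\theta_\Sigma = \theta_{\partial\Sigma} = 0$, so Theorem \ref{umbilicaltheorem} yields $\chi(\Sigma) = 0$. A compact orientable surface with non-empty boundary and $\chi(\Sigma) = 0$ must be an annulus: writing $\chi(\Sigma) = 2 - 2g - b$ where $g$ is the genus and $b \geq 1$ the number of boundary components, the constraint $2 - 2g - b = 0$ together with $g \geq 0$ and $b \geq 1$ forces $g = 0$ and $b = 2$, which characterises the annulus. I would need to confirm orientability, which is guaranteed since $\Sigma$ carries a globally defined unit normal $\eta$ (as set up preceding Lemma \ref{dSigma}).

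The main obstacle, if any, is ensuring that the function $|\phi|$ genuinely fits the hypotheses of Lemma \ref{laplaciano}, namely that $|\phi|$ can be written locally as $|\psi| f$ with $\psi$ holomorphic and $f$ smooth positive, so that its zeros have well-defined integer multiplicities; this is the standard fact that for a constant mean curvature surface the umbilicity tensor corresponds to a holomorphic Hopf differential, whose zeros are isolated and of integer order. This is already implicitly used in the proof of Theorem \ref{umbilicaltheorem} (it invokes \cite{eschenburg1988constant} and the discreteness of umbilical points on $\partial\Sigma$), so for this corollary it suffices to cite Theorem \ref{umbilicaltheorem} and reason purely combinatorially with the two non-negative integers $\theta_\Sigma, \theta_{\partial\Sigma}$; the topological classification via $\chi$ is then entirely elementary.
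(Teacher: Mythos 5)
Your proof is correct and takes essentially the same route as the paper: the paper obtains this corollary directly from Theorem \ref{umbilicaltheorem} together with the observation that the orders $\theta_\Sigma$ and $\theta_{\partial\Sigma}$ are non-negative integers, which is exactly your combinatorial argument combined with the classification of compact orientable surfaces with boundary and $\chi = 0$. Your explicit disposal of the disk case via the Nitsche Theorem (Souam's version, which the paper cites) is a detail the paper leaves implicit, but it is the intended reading of the ``not a disk'' hypothesis in Theorem \ref{umbilicaltheorem}.
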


\begin{remark}
	We point out that Corollary \ref{corolarioumbilico} generalise \cite[Lemma 4.3]{li2019free} for space forms balls and proves its converse statement, both for surfaces with constant mean curvature.
\end{remark}

\section{Geometric Integral Inequalities}\label{gii}

\subsection{A First Inequality}\label{inspiradosimon}


We denote by $B$ the unit ball of a space form $\mathbb{M}^{n+1}_c,$  for $c \in \{- 1, 0\},$ such that $B$ is modeled by the Euclidean ball centered at the origin of $\mathbb{R}^{n + 1}$ with an appropriate radius, and by $C_{n, H}$ the constant
\[C_{n, H} = \frac{(n + 2)(n^2 - H^2)}{4n},\]
depending on the constant mean curvature $H$ and the dimension of the hypersurface $\Sigma^n$ contained in $B.$ We also recall the polynomial $p_H$ associated to $\Sigma$ defined by
\begin{equation*}
	p_H(t) = t^2 + \frac{n - 2}{\sqrt{n(n - 1)}}|H|t - \frac{H^2}{n} - nc,\ \ t \in \mathbb{R}.
\end{equation*}

\begin{theorem}\label{1desigualdade1}
	Let $\Sigma^n,$ $n \geq 3,$ be a free boundary hypersurface in $B$. If $\Sigma$ has constant mean curvature $H$, then
	\[\int_\Sigma |\phi|^2 \left(p_H(|\phi|) - C_{n, H}\right) \geq \frac{n}{2} \int_{\partial \Sigma} \left((H - \widetilde{H})^2 - |\widetilde{A}|^2 \right) + 2\mu_0 H \int_{\partial \Sigma} \widetilde{H} - \frac{3n - 2}{2n}H^2 |\partial \Sigma| + L(\mu_0),\]
    where
    \[L(\mu_0) = (\mu_0 - 1) \left((n + 1) \int_{\partial \Sigma} (H - \widetilde{H})^2 + \int_{\partial \Sigma} |\widetilde{A}|^2 - 2H^2 |\partial \Sigma| \right),\]
    $\phi$ is the umbilicity tensor of $\Sigma$, while $\widetilde{A}$ and $\widetilde{H}$ are, respectively, the Weingarten operator and the mean curvature of $\partial \Sigma$ as a hypersurface in $\partial B.$ Moreover, the equality occurs if, and only if, $\Sigma$ is a totally umbilical hypersurface.
\end{theorem}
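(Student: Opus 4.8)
The plan is to integrate the Simons Inequality of Lemma~\ref{lemasimon} over $\Sigma$ and feed the resulting gradient term into the Hardy-type inequality of Lemma~\ref{bmv2}, which is calibrated precisely so as to produce the constant $C_{n,H}$; the boundary terms that survive are then rewritten using the Key Lemma~\ref{dSigma}. This is exactly the program announced in the introduction (``we integrate the Simons Inequality''), and the restriction $c\in\{-1,0\}$ is there only because Lemma~\ref{bmv2} requires $c\leq 0$.

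First I would integrate \eqref{simon} over $\Sigma$ and apply the Divergence Theorem to the Laplacian term, obtaining
\[\int_\Sigma |\phi|^2 p_H(|\phi|) \geq \frac{n+2}{n}\int_\Sigma |\nabla_\Sigma|\phi||^2 - \frac{1}{2}\int_{\partial\Sigma}\nu(|\phi|^2).\]
Next, since $c\leq 0$, I would apply Lemma~\ref{bmv2} with the non-negative function $f=|\phi|$. Rearranging it and multiplying through by $n(n+2)/4$ turns it into the lower bound
\[\frac{n+2}{n}\int_\Sigma |\nabla_\Sigma|\phi||^2 \geq C_{n,H}\int_\Sigma |\phi|^2 - \frac{n+2}{2}\int_{\partial\Sigma}|\phi|^2,\]
where one checks that $\tfrac{n(n+2)}{4}\cdot\tfrac{n^2-H^2}{n^2}=\tfrac{(n+2)(n^2-H^2)}{4n}$ is exactly $C_{n,H}$. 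Chaining the two displays and moving $C_{n,H}\int_\Sigma|\phi|^2$ to the left yields
\[\int_\Sigma |\phi|^2\big(p_H(|\phi|)-C_{n,H}\big) \geq -\frac{n+2}{2}\int_{\partial\Sigma}|\phi|^2 - \frac{1}{2}\int_{\partial\Sigma}\nu(|\phi|^2).\]
A mild technical caveat is that $|\phi|$ is only Lipschitz at the zeros of $\phi$, so it need not lie in $C^1(\Sigma)$; I would dispose of this in the standard way, running the argument with $\sqrt{|\phi|^2+\varepsilon}$ and letting $\varepsilon\to 0$.

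It then remains to identify the two boundary integrals on the right with the stated expression. Substituting the two formulas of Lemma~\ref{dSigma}, namely $|\phi|^2 = |\widetilde A|^2 + (H-\widetilde H)^2 - H^2/n$ and $\nu(|\phi|^2) = -2\mu_0\big(|\widetilde A|^2 + (n+1)(H-\widetilde H)^2 - 2H(H-\widetilde H)\big)$, and writing $\int_{\partial\Sigma}(H-\widetilde H) = H|\partial\Sigma| - \int_{\partial\Sigma}\widetilde H$, I would expand and regroup, isolating the factor $(\mu_0-1)$ into the term $L(\mu_0)$. A direct computation then matches the coefficients of the claimed right-hand side, including the constant $\tfrac{3n-2}{2n}$ in front of $H^2|\partial\Sigma|$.

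Finally, for the equality statement: since the displayed inequality is obtained by chaining Lemma~\ref{lemasimon} and Lemma~\ref{bmv2} (with a common boundary term), equality forces equality in both. The equality case of Lemma~\ref{bmv2} holds only when $f=|\phi|$ vanishes identically, so $\Sigma$ must be totally umbilical; conversely, if $\Sigma$ is totally umbilical then $\phi\equiv 0$, so $|\phi|^2$ and $\nu(|\phi|^2)$ vanish on $\partial\Sigma$ and the inequality degenerates to the identity $0=0$. The main obstacle is therefore not conceptual—the architecture is dictated by the two preliminary lemmas—but rather the careful algebraic bookkeeping in the penultimate step, where the several boundary integrals coming from Lemma~\ref{dSigma} must be combined so that the entire $\mu_0$-dependence collapses exactly into $L(\mu_0)$.
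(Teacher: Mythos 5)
Your proposal is correct and follows essentially the same route as the paper's own proof: integrate the Simons Inequality \eqref{simon}, apply the Divergence Theorem, feed the gradient term into Lemma~\ref{bmv2} with $f=|\phi|$ to produce $C_{n,H}$, rewrite the two boundary integrals via Lemma~\ref{dSigma}, and obtain the equality characterization from the equality case of Lemma~\ref{bmv2}. The only addition is your regularization $\sqrt{|\phi|^2+\varepsilon}$ to handle the mere Lipschitz regularity of $|\phi|$ at umbilic points, a legitimate technical care that the paper passes over in silence.
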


\begin{proof}
Integrating Simons Inequality \eqref{simon}, by the Divergence Theorem and Lemma \ref{bmv2}
\begin{equation*}\label{intsimon}
	\begin{aligned}
		\int_\Sigma |\phi|^2 p_H(|\phi|) &\geq \frac{n+2}{n} \int_\Sigma |\nabla_\Sigma |\phi||^2 - \frac{1}{2} \int_{\partial \Sigma} \nu(|\phi|^2)\\
		&\geq \frac{n + 2}{n} \cdot \frac{n^2}{4} \left(\left(1 - \frac{H^2}{n^2}\right) \int_\Sigma |\phi|^2 - \frac{2}{n} \int_{\partial \Sigma} |\phi|^2\right) - \frac{1}{2} \int_{\partial \Sigma} \nu(|\phi|^2)\\
		&= C_{n, H} \int_\Sigma |\phi|^2 - \frac{n + 2}{2} \int_{\partial \Sigma} |\phi|^2 - \frac{1}{2} \int_{\partial \Sigma} \nu(|\phi|^2).
	\end{aligned}
\end{equation*}
Observe that Lemma \ref{dSigma} implies that along $\partial \Sigma,$
\begin{multline*}
    \frac{n + 2}{2} |\phi|^2 + \frac{1}{2}\nu(|\phi|^2) = \frac{n + 2 - 2 \mu_0}{2} |\widetilde{A}|^2 + \frac{n + 2 - 2(n + 1)\mu_0}{2}(H - \widetilde{H})^2\\
    - 2\mu_0 H \widetilde{H} - \frac{n + 2 - 4n \mu_0}{2n}H^2.
\end{multline*}
Therefore,
\begin{multline*}
	    \int_\Sigma |\phi|^2 \left(p_H(|\phi|) - C_{n, H}\right) \geq \frac{2(n + 1)\mu_0 - (n + 2)}{2} \int_{\partial \Sigma} (H - \widetilde{H})^2 + \frac{2 \mu_0 - (n + 2)}{2} \int_{\partial \Sigma} |\widetilde{A}|^2\\
	    + 2\mu_0 H \int_{\partial \Sigma} \widetilde{H} - \frac{4n \mu_0 - (n + 2)}{2n}H^2 |\partial \Sigma|.
    \end{multline*}
By a direct computation, we obtain the inequality. Note that, by Lemma \ref{bmv2}, equality occurs if, and only if, $\phi$ vanishes identically in $\Sigma.$
\end{proof}

\begin{corollary}
	Let $\Sigma^3$ be a free boundary hypersurface in $B.$ If $\Sigma$ has constant mean curvature $H$ and $\Gamma_1, \ldots, \Gamma_m$ denote the $m$ boundary connected components of $\Sigma,$ then
	\begin{equation*}
   	\int_\Sigma |\phi|^2 \left(p_H(|\phi|) - C_{3, H}\right) \geq 6 \pi \sum_{i = 1}^m \chi(\Gamma_i)+(2\mu_0-3) H \int_{\partial \Sigma} \widetilde{H}+\frac{H^2 - 9c - 9\mu_0^2}{3} |\partial \Sigma| + L(\mu_0),
	\end{equation*}
	where $\chi(M)$ and $\widetilde{H}$ are, respectively, the Euler characteristic of the given manifold $M$ and the mean curvature of $\partial \Sigma$ as a hypersurface in $\partial B.$ Moreover, the equality occurs if, and only if, $\Sigma$ is a totally umbilical hypersurface.
\end{corollary}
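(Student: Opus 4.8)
The plan is to obtain this statement directly from Theorem \ref{1desigualdade1} specialized to $n = 3$, converting the boundary curvature term into topological data by combining the Gauss equation with Gauss--Bonnet. Setting $n = 3$ in Theorem \ref{1desigualdade1} gives
\[\int_\Sigma |\phi|^2\left(p_H(|\phi|) - C_{3, H}\right) \geq \frac{3}{2} \int_{\partial \Sigma} \left((H - \widetilde{H})^2 - |\widetilde{A}|^2\right) + 2\mu_0 H \int_{\partial \Sigma} \widetilde{H} - \frac{7}{6}H^2 |\partial \Sigma| + L(\mu_0),\]
so the whole task reduces to re-expressing the first boundary integral. Expanding $(H - \widetilde{H})^2 = H^2 - 2H\widetilde{H} + \widetilde{H}^2$ isolates the combination $\widetilde{H}^2 - |\widetilde{A}|^2$, which on a surface equals twice the determinant of $\widetilde{A}$; I would therefore aim to control $\det \widetilde{A}$ on each boundary component $\Gamma_i$.

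The key step is a curvature computation on $\partial B$. By \eqref{curvaturaesfera}, $\partial B$ is totally umbilical in $\mathbb{M}_c^4$ with all principal curvatures equal to $-\mu_0$, so the Gauss equation, in the same form $R_{ij} = \kappa_i \kappa_j + c$ used in the proof of Lemma \ref{lemasimon}, shows that $\partial B$ has constant sectional curvature $\mu_0^2 + c$. Each $\Gamma_i$ is a closed surface inside this three-dimensional space form, so a second application of the Gauss equation gives, pointwise, $K_{\Gamma_i} = \det \widetilde{A} + (\mu_0^2 + c)$, where $K_{\Gamma_i}$ denotes the intrinsic Gauss curvature. Writing $\det \widetilde{A} = \tfrac{1}{2}(\widetilde{H}^2 - |\widetilde{A}|^2)$ and applying Gauss--Bonnet on $\Gamma_i$ then yields
\[\int_{\Gamma_i} \left(\tfrac{1}{2}\left(\widetilde{H}^2 - |\widetilde{A}|^2\right) + \mu_0^2 + c\right) = 2\pi \chi(\Gamma_i).\]

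Summing over $i = 1, \ldots, m$ produces $\int_{\partial \Sigma}(\widetilde{H}^2 - |\widetilde{A}|^2) = 4\pi \sum_{i = 1}^m \chi(\Gamma_i) - 2(\mu_0^2 + c)|\partial \Sigma|$. Substituting this into the expanded first boundary integral gives
\[\frac{3}{2} \int_{\partial \Sigma}\left((H - \widetilde{H})^2 - |\widetilde{A}|^2\right) = 6\pi \sum_{i = 1}^m \chi(\Gamma_i) - 3H \int_{\partial \Sigma} \widetilde{H} + \frac{3}{2}H^2 |\partial \Sigma| - 3(\mu_0^2 + c)|\partial \Sigma|,\]
and feeding this back into the $n = 3$ inequality, the $\int_{\partial \Sigma} \widetilde{H}$ terms combine to $(2\mu_0 - 3)H \int_{\partial \Sigma} \widetilde{H}$ while the $|\partial \Sigma|$ coefficients combine to $\tfrac{3}{2}H^2 - \tfrac{7}{6}H^2 - 3(\mu_0^2 + c) = \tfrac{1}{3}(H^2 - 9c - 9\mu_0^2)$, which is precisely the claimed inequality. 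The equality characterization is inherited verbatim from Theorem \ref{1desigualdade1}, since the only inequality invoked is the one there and the passage to the boundary is an exact identity. I expect no serious obstacle: the sole point requiring genuine care is the determination of the sectional curvature of $\partial B$, with everything else being bookkeeping of the linear and quadratic boundary terms.
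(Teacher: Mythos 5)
Your proposal is correct and follows essentially the same route as the paper: specialize Theorem \ref{1desigualdade1} to $n=3$, use the Gauss equation (with $\partial B$ totally umbilical of principal curvature $-\mu_0$, hence sectional curvature $\mu_0^2+c$) to convert $\widetilde{H}^2-|\widetilde{A}|^2$ into $2(\widetilde{K}_i-c-\mu_0^2)$ on each $\Gamma_i$, and finish with Gauss--Bonnet; your arithmetic and the inherited equality characterization match the paper's proof exactly. The only cosmetic difference is that you compute the curvature of $\partial B$ as an explicit intermediate step, whereas the paper writes the combined Gauss equation for $\Gamma_i$ in a single line.
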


\begin{proof}
By the Gauss Equation and \eqref{AemdSigma}, for each $i = 1, \ldots, m,$
\[\widetilde{H}^2 = \widetilde{\kappa}_1^2 + \widetilde{\kappa}_2^2 + 2 \widetilde{\kappa}_1 \widetilde{\kappa}_2 = |\widetilde{A}|^2 + 2(\widetilde{K}_i - c - \mu_0^2),\]
where $\widetilde{\kappa}_1$ and $\widetilde{\kappa}_2$ are the principal curvatures of $\Gamma_i$ as a surface in $\partial B$, and $\widetilde{K}_i$ is the intrinsic curvature of $\Gamma_i.$ Thus, Theorem \ref{1desigualdade1} and the Gauss-Bonnet Theorem imply
\begin{align*}
   	\int_\Sigma |\phi|^2 \Big(p_H(|\phi|) - C_{3, H}\Big) - &L(\mu_0)\\
   	&\geq \frac{3}{2}\int_{\partial \Sigma} \left(H^2 - 2H \widetilde{H} + \widetilde{H}^2-|\widetilde{A}|^2\right) + 2\mu_0 H \int_{\partial \Sigma} \widetilde{H} - \frac{7}{6} H^2 |\partial \Sigma|\\
   	&= 3\sum_{i = 1}^m \int_{\Gamma_i}\widetilde{K}_i +(2\mu_0-3) H \int_{\partial \Sigma} \widetilde{H}+\frac{H^2 - 9c - 9\mu_0^2}{3} |\partial \Sigma|\\
	&= 6 \pi \sum_{i = 1}^m \chi(\Gamma_i) +(2\mu_0-3) H \int_{\partial \Sigma} \widetilde{H}+\frac{H^2 - 9c - 9\mu_0^2}{3} |\partial \Sigma|.
\end{align*}
\end{proof}

\subsection{A Second Inequality}\label{inspiradocatino}

From now on, we denote by $B$ the unit ball of a space form $\mathbb{M}^{n+1}_c,$  for $c \in \{- 1, 0, 1\},$ such that $B$ is modeled by the Euclidean ball centered at the origin of $\mathbb{R}^{n + 1}$ with an appropriate radius.

\begin{theorem}\label{desigualdade2n}
	Let $\Sigma^n,$ $n \geq 2,$ be a compact hypersurface in $\mathbb{M}_c^{n + 1},$ for $c \in \{- 1, 0, 1\},$ with boundary $\partial \Sigma.$ If $\Sigma$ is a non-totally umbilical hypersurface with constant mean curvature $H$ and $\nu(|\phi|^2)  |\phi|^{-\frac{n+2}{n}}$ is integrable along $\partial \Sigma,$ then
		\[-\int_\Sigma |\phi|^{\frac{n - 2}{n}} p_H(|\phi|) \leq \frac{1}{2}\int_{\partial \Sigma} |\phi|^{-\frac{n+2}{n}}\nu(|\phi|^2),\]
	where $\phi$ is the umbilicity tensor of $\Sigma.$ Moreover, if $n \geq 3,$ the equality occurs if, and only if:
	\begin{itemize}
		\item[$\mathrm{(i)}$] $\Sigma$ is a catenoid when $H=0$ and $c \in \{- 1, 0\}$;
		\item[$\mathrm{(ii)}$] $\Sigma$ is either a Clifford torus or an Otsuki hypersurface when $H=0$ and $c = 1$;
		\item[$\mathrm{(iii)}$] $\Sigma$ is a Delaunay hypersurface when $H\neq 0$.
	\end{itemize}
\end{theorem}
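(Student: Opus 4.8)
The plan is to integrate the Simons Inequality from Lemma \ref{lemasimon} against the weight $|\phi|^{-\frac{2(n+1)}{n}}$, chosen precisely so that the gradient term on the right-hand side of \eqref{simon} is absorbed into a divergence. Recall that Simons gives
\[
|\phi|^2 p_H(|\phi|) \geq \frac{n+2}{n} |\nabla_\Sigma|\phi||^2 - \frac{1}{2} \Delta_\Sigma |\phi|^2
\]
pointwise away from the umbilical locus. Multiplying through by $|\phi|^{-\frac{2(n+1)}{n}}$ and rewriting, the term $-\tfrac12 |\phi|^{-\frac{2(n+1)}{n}}\Delta_\Sigma|\phi|^2 = -|\phi|^{-\frac{2(n+1)}{n}}\bigl(|\phi|\Delta_\Sigma|\phi| + |\nabla_\Sigma|\phi||^2\bigr)$, and one checks that the weighted Laplacian combines with the scaled gradient term to form $\tfrac12 \divv\bigl(|\phi|^{-\frac{2(n+1)}{n}}\nabla_\Sigma|\phi|^2\bigr)$ up to the correct multiple of $|\nabla_\Sigma|\phi||^2$. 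The bookkeeping is an exercise in the chain rule: with $w = |\phi|^{-\frac{2(n+1)}{n}}$, one has $\nabla_\Sigma w = -\frac{2(n+1)}{n}|\phi|^{-\frac{3n+2}{n}}\nabla_\Sigma|\phi|$, and the coefficient $\frac{n+2}{n}$ in Simons is exactly what makes the residual gradient terms cancel. After this identification, the pointwise inequality reads
\[
|\phi|^{-\frac{2(n+1)}{n}}|\phi|^2 p_H(|\phi|) \geq \frac{1}{2}\divv\Bigl(|\phi|^{-\frac{2(n+1)}{n}}\nabla_\Sigma|\phi|^2\Bigr),
\]
where the left side simplifies to $|\phi|^{-\frac{n-2}{n}\cdot(-1)}p_H(|\phi|) = |\phi|^{\frac{n-2}{n}}p_H(|\phi|)$ after collecting exponents; I would double-check the arithmetic $2 - \frac{2(n+1)}{n} = \frac{2n - 2n - 2}{n} = -\frac{2}{n}$, so the weighted left side is $|\phi|^{-\frac{2}{n}}p_H(|\phi|)$, and I would reconcile this with the stated exponent by tracking the sign conventions carefully.

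Next I would integrate over $\Sigma$ and apply the Divergence Theorem. The integral of the left-hand side is bounded below by $-\int_\Sigma |\phi|^{\frac{n-2}{n}}p_H(|\phi|)$ after the sign flip (the theorem states the negative of this quantity on the left), and the divergence integrates to the boundary flux $\frac{1}{2}\int_{\partial\Sigma}|\phi|^{-\frac{2(n+1)}{n}}\nu(|\phi|^2)$. Matching exponents, $-\frac{2(n+1)}{n} + 2 = -\frac{2}{n}$, but the boundary weight in the statement is $|\phi|^{-\frac{n+2}{n}}$; since $\nu(|\phi|^2) = 2|\phi|\,\nu(|\phi|)$ carries an extra factor of $|\phi|$, I would absorb it to see $|\phi|^{-\frac{2(n+1)}{n}}\nu(|\phi|^2) = |\phi|^{-\frac{2(n+1)}{n}}\cdot 2|\phi|\,\nu(|\phi|) = 2|\phi|^{-\frac{n+2}{n}}\nu(|\phi|)$, which reconciles with the stated form up to rewriting $\nu(|\phi|^2)$. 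The integrability hypothesis on $\nu(|\phi|^2)|\phi|^{-\frac{n+2}{n}}$ is exactly what guarantees the boundary term is finite and that the Divergence Theorem applies despite the possible vanishing of $|\phi|$ on $\partial\Sigma$; near umbilical points in the interior, the same kind of limiting argument as in Lemma \ref{laplaciano}, excising small balls $\mathcal{B}_i(\delta)$ around the zeros of $|\phi|$, shows the interior contributions from those punctures vanish as $\delta \to 0$ because the order of vanishing makes the flux across $\partial\mathcal{B}_i(\delta)$ tend to zero.

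For the equality case when $n \geq 3$, the chain of inequalities collapses precisely when the two inequalities used in deriving Simons become equalities pointwise: the Kato Inequality $|\nabla_\Sigma\phi|^2 \geq \frac{n+2}{n}|\nabla_\Sigma|\phi||^2$ and the Okumura Inequality bounding $\sum_i \sigma_i^3$. This is exactly the equality locus characterized in Lemma \ref{lemasimon}, so the classification into catenoids (for $H=0$, $c \in \{-1,0\}$), Clifford tori or Otsuki hypersurfaces (for $H=0$, $c=1$), and Delaunay hypersurfaces (for $H \neq 0$) is inherited directly; I would simply cite the equality discussion in the proof of Lemma \ref{lemasimon}, noting that equality in the integrated inequality forces pointwise equality in Simons on the open set where $\phi \neq 0$, which by analyticity extends to the characterization of the whole hypersurface.

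The main obstacle I anticipate is the careful handling of the weight singularity at the umbilical set. Because the weight $|\phi|^{-\frac{2(n+1)}{n}}$ blows up where $|\phi| = 0$, neither the pointwise Simons manipulation nor the Divergence Theorem is valid across those points without justification; the integrability assumption handles the boundary, but the interior zeros require the excision-and-limit argument, and verifying that the flux terms $\int_{\partial\mathcal{B}_i(\delta)}|\phi|^{-\frac{2(n+1)}{n}}\nu_i(|\phi|^2)$ vanish as $\delta \to 0$ demands knowing the precise local structure of $|\phi|$ near its zeros—namely that $|\phi| = |\psi|f$ with $\psi$ holomorphic (in the surface case) or an analogous control in higher dimensions. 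Reconciling the various negative-fractional exponents and sign conventions so that the final bound matches the stated $|\phi|^{\frac{n-2}{n}}p_H(|\phi|)$ and $|\phi|^{-\frac{n+2}{n}}\nu(|\phi|^2)$ verbatim is the other place where I would slow down and verify each algebraic step.
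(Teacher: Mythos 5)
Your overall strategy --- weighting the Simons Inequality by a negative power of $|\phi|$ so that the gradient term is absorbed into a divergence, then integrating --- is exactly the idea behind the paper's proof, but your proposal has a genuine error in the weight. Writing $w = |\phi|^{-\alpha}$, one has $w\,\Delta_\Sigma|\phi|^2 = \divv\bigl(w\,\nabla_\Sigma|\phi|^2\bigr) + 2\alpha\, w\,|\nabla_\Sigma|\phi||^2$, so the weighted Simons Inequality \eqref{simon} reads
\begin{equation*}
w\,|\phi|^2 p_H(|\phi|) + \tfrac{1}{2}\divv\bigl(w\,\nabla_\Sigma|\phi|^2\bigr) \geq \Bigl(\tfrac{n+2}{n}-\alpha\Bigr)\, w\,|\nabla_\Sigma|\phi||^2 .
\end{equation*}
The cancellation you invoke happens precisely for $\alpha = \frac{n+2}{n}$, in which case the interior and boundary exponents come out exactly as in the statement: $|\phi|^{2-\frac{n+2}{n}} = |\phi|^{\frac{n-2}{n}}$ inside, and $|\phi|^{-\frac{n+2}{n}}\nu(|\phi|^2)$ on the boundary. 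With your choice $\alpha = \frac{2(n+1)}{n}$ the residual coefficient is $\frac{n+2}{n}-\frac{2(n+1)}{n} = -1$, so the leftover gradient term has the wrong sign and cannot be discarded; moreover your interior integrand becomes $|\phi|^{-2/n}p_H(|\phi|)$ rather than $|\phi|^{\frac{n-2}{n}}p_H(|\phi|)$, and your attempted reconciliation of the boundary term does not work, since $|\phi|^{-\frac{2(n+1)}{n}}\nu(|\phi|^2) = 2|\phi|^{-\frac{n+2}{n}}\nu(|\phi|)$ while the stated quantity is $|\phi|^{-\frac{n+2}{n}}\nu(|\phi|^2) = 2|\phi|^{-\frac{2}{n}}\nu(|\phi|)$; these differ by a factor of $|\phi|$. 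The exponent $\frac{2(n+1)}{n}$ only appears in the paper as the intermediate exponent of $f_\varepsilon$ in the cross term produced by differentiating the weight $f_\varepsilon^{-\frac{n+2}{n}}$; it is not the weight itself.

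The second gap is your treatment of the interior umbilical set. Your excision-and-limit argument requires the zeros of $|\phi|$ to be isolated with controlled vanishing order (the factorization $|\phi| = |\psi| f$ with $\psi$ holomorphic), and, as you yourself note, this structure is available only when $n = 2$. For $n \geq 3$ the umbilical set is only known to have measure zero (cf. \cite[Lemma 2.2]{catino2016conformally}); it need not consist of isolated points, and there is no analogue of the flux estimate across small spheres around it. The paper sidesteps this entirely by truncating the weight from below: it works with $f_\varepsilon = \max\{|\phi|,\varepsilon\}$, so that $f_\varepsilon^{-\frac{n+2}{n}}$ is bounded, Green's identity applies on all of $\Sigma$ with no excision, the residual gradient terms are shown to be nonnegative (they cancel identically on $\{|\phi|\geq \varepsilon\}$ and reduce to $\varepsilon^{-\frac{n+2}{n}}\int_{\Sigma\setminus\Omega_\varepsilon}|\nabla_\Sigma|\phi||^2 \geq 0$ on the complement), and one then lets $\varepsilon \to 0$ using that $\{\phi = 0\}$ is null and that $f_\varepsilon = |\phi|$ along $\partial\Sigma$ for small $\varepsilon$. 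Your equality discussion (equality forces pointwise equality in Simons, then Lemma \ref{lemasimon} classifies) agrees with the paper's and is fine, but only once the inequality itself is established with the correct weight and a dimension-independent treatment of the singular set.
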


\begin{proof}
Since $\phi$ vanishes at most in a subset of null volume (cf. \cite[Lemma 2.2]{catino2016conformally}), given $\varepsilon > 0,$ we define the non-empty subset of $\Sigma$
\[\Omega_\varepsilon = \{x \in \Sigma \ : \ |\phi|(x)\geq \varepsilon\}\]
and the continuous function
\[f_{\varepsilon}(x) = \Bigg\{\begin{array}{cl}
	|\phi|(x) & \mbox{if\ } x \in \Omega_{\varepsilon}\\
	\varepsilon & \mbox{if\ } x \in \Sigma \setminus \Omega_{\varepsilon}.
\end{array}\]
Note that, by Green's Identity
\begin{equation}\label{green}
	\begin{aligned}
		\int_\Sigma \Delta_\Sigma |\phi|^2 f_\varepsilon^{-\frac{n+2}{n}} &= - \int_\Sigma \langle \nabla_\Sigma |\phi|^2, \nabla_\Sigma f_\varepsilon^{-\frac{n+2}{n}}\rangle + \int_{\partial \Sigma} \nu(|\phi|^2) f_\varepsilon^{-\frac{n+2}{n}}\\
		&= \frac{2(n+2)}{n} \int_\Sigma \langle \nabla_\Sigma |\phi|, \nabla_\Sigma f_\varepsilon \rangle |\phi| f_\varepsilon^{- \frac{2(n+1)}{n}} + \int_{\partial \Sigma} \nu(|\phi|^2)  f_\varepsilon^{-\frac{n+2}{n}}.
	\end{aligned}
\end{equation}
Thus, by \eqref{green} and Simons Inequality \eqref{simon}, we obtain
\begin{equation}\label{intlapacian}
	\begin{aligned}
		\frac{n+2}{n} \int_\Sigma \Big(|\nabla_\Sigma |\phi||^2 f_\varepsilon^{-\frac{n+2}{n}} - \langle \nabla_\Sigma |\phi|, &\nabla_\Sigma f_\varepsilon \rangle |\phi| f_\varepsilon^{- \frac{2(n+1)}{n}} \Big)\\
		&- \int_\Sigma |\phi|^2 p_H(|\phi|)  f_\varepsilon^{-\frac{n+2}{n}} \leq \frac{1}{2}\int_{\partial \Sigma} \nu(|\phi|^2)  f_\varepsilon^{-\frac{n+2}{n}}.
	\end{aligned}
\end{equation}
Since $f_\varepsilon = |\phi|$ on $\Omega_\varepsilon$ and $\nabla_\Sigma f_\varepsilon = 0$ on $\Sigma \setminus \Omega_\varepsilon,$
\begin{equation}\label{zero}
		\int_\Sigma \Big(|\nabla_\Sigma |\phi||^2 f_\varepsilon^{-\frac{n+2}{n}} - \langle \nabla_\Sigma |\phi|, \nabla_\Sigma f_\varepsilon \rangle |\phi| f_\varepsilon^{-\frac{2(n+1)}{n}} \Big) = \int_{\Sigma \setminus \Omega_\varepsilon} |\nabla_\Sigma |\phi||^2 \varepsilon^{-\frac{n+2}{n}} \geq 0,
\end{equation}
and, consequently, \eqref{intlapacian} becomes
\[- \int_\Sigma |\phi|^2 p_H(|\phi|)  f_\varepsilon^{-\frac{n+2}{n}} \leq \frac{1}{2}\int_{\partial \Sigma} \nu(|\phi|^2)  f_\varepsilon^{-\frac{n+2}{n}}.\]
Therefore, since $|\phi|^{\frac{n + 2}{n}} f_\varepsilon^{- \frac{n + 2}{n}} \to 1$ almost everywhere on $\Sigma$ when $\varepsilon \to 0,$ and $f_\varepsilon = |\phi|$ along $\partial \Sigma$ for all small enough $\varepsilon > 0,$ we have that
\[\int_\Sigma |\phi|^\frac{n-2}{n}\left(\frac{H^2}{n} - |\phi|^2 - \frac{n-2}{\sqrt{n(n-1)}} |H| |\phi| + nc\right) \leq \frac{1}{2}\int_{\partial \Sigma} |\phi|^{- \frac{n + 2}{n}}\nu(|\phi|^2).\]
Note that equality occurs if, and only if, also occurs in Simons Inequality and in \eqref{zero}. Thus, Lemma \ref{lemasimon} concludes our assertion.
\end{proof}

We give another proof of Theorem \ref{desigualdade2n} when $n = 2.$
	
\begin{theorem}\label{desigualdade22}
	Let $\Sigma^2$ be a compact constant mean curvature surface in $\mathbb{M}^3_c,$ for $c \in \{- 1, 0, 1\},$ with boundary $\partial \Sigma.$ If $\Sigma$ is a non-totally umbilical surface and $\nu(|\phi|^2) |\phi|^{- 2}$ is integrable along $\partial\Sigma,$ then
	\begin{equation}\label{ineq}
		- \int_\Sigma p_H(|\phi|) \leq \frac{1}{2} \int_{\partial \Sigma} |\phi|^{- 2} \nu(|\phi|^2).
	\end{equation}
	Moreover, the equality occurs if, and only if, $\Sigma$ has no umbilical point.
\end{theorem}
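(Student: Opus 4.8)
The plan is to specialize the argument of Theorem \ref{desigualdade2n} to the case $n=2$, where several of the exponents collapse to simple values and the Simons Inequality degenerates in a useful way. When $n=2$, the weight $|\phi|^{\frac{n-2}{n}} = |\phi|^0 = 1$ and the boundary weight $|\phi|^{-\frac{n+2}{n}} = |\phi|^{-2}$, so the claimed inequality \eqref{ineq} is formally exactly the $n=2$ instance of the previous theorem. First I would revisit the Simons Inequality \eqref{simon} in dimension two: there the term $\frac{n-2}{\sqrt{n(n-1)}}|H|t$ in $p_H$ vanishes, so $p_H(|\phi|) = |\phi|^2 - \frac{H^2}{2} - 2c$, and crucially the Kato-type term $\frac{n+2}{n}|\nabla_\Sigma|\phi||^2 = 2|\nabla_\Sigma|\phi||^2$ will be handled by a direct Green's identity computation rather than by invoking the equality characterization of Lemma \ref{lemasimon}, which was only stated for $n\geq 3$.

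Concretely, I would mimic the cutoff construction from the proof of Theorem \ref{desigualdade2n}: fix $\varepsilon>0$, set $\Omega_\varepsilon = \{|\phi|\geq\varepsilon\}$ and define $f_\varepsilon$ to equal $|\phi|$ on $\Omega_\varepsilon$ and $\varepsilon$ elsewhere. Applying Green's Identity to $\Delta_\Sigma|\phi|^2$ against the test weight $f_\varepsilon^{-2}$ (the $n=2$ case of \eqref{green}), combining with Simons Inequality \eqref{simon}, and discarding the manifestly non-negative gradient term as in \eqref{zero}, I expect to arrive at
\[
-\int_\Sigma |\phi|^2 p_H(|\phi|) f_\varepsilon^{-2} \leq \frac{1}{2}\int_{\partial\Sigma}\nu(|\phi|^2) f_\varepsilon^{-2}.
\]
Letting $\varepsilon\to 0$, using that $|\phi|^2 f_\varepsilon^{-2}\to 1$ almost everywhere on $\Sigma$ (since $\phi$ vanishes only on a null set) and that $f_\varepsilon = |\phi|$ along $\partial\Sigma$ for small $\varepsilon$ by the integrability hypothesis, recovers \eqref{ineq}.

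The interesting part is the \textbf{equality characterization}, which differs from the higher-dimensional case. Tracing through the argument, equality in \eqref{ineq} forces equality in Simons Inequality \eqref{simon} together with $\int_{\Sigma\setminus\Omega_\varepsilon}|\nabla_\Sigma|\phi||^2\varepsilon^{-2} = 0$ in the limit. For a surface, equality in \eqref{simon} at $n=2$ reduces (the Okumura term being absent, and Kato's inequality becoming an equality automatically for $2\times 2$ traceless symmetric tensors wherever $\phi\neq 0$) to the vanishing of the curvature obstruction precisely at points where $\phi\neq 0$; the surface being non-totally umbilical, this translates into the statement that $|\phi|$ never vanishes, i.e. $\Sigma$ has no umbilical point. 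I would make this rigorous by observing that the classical fact $|\phi|=|\psi|$ for a holomorphic Hopf-type differential $\psi$ means the zeros of $|\phi|$ are isolated; equality then propagates the inequality \eqref{zero} to be an identity, and the only way the limiting comparison is sharp for a non-totally-umbilical surface is when $\Omega_\varepsilon$ exhausts $\Sigma$ as $\varepsilon\to 0$ with no leftover zero set. \textbf{The main obstacle} I anticipate is precisely this last implication: separating the two distinct mechanisms by which equality can be lost (genuine strictness in Simons versus contribution from the vanishing locus of $|\phi|$) and showing that, in the two-dimensional non-totally-umbilical setting, the absence of umbilical points is both necessary and sufficient, since the $n\geq 3$ rigidity list of Lemma \ref{lemasimon} is unavailable and must be replaced by the surface-specific Hopf differential argument.
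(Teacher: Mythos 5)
Your derivation of the inequality itself is sound: for $n=2$ the cutoff argument goes through verbatim (indeed the paper states Theorem \ref{desigualdade2n} for $n\geq 2$, so this part is not new content), and the limit $\varepsilon\to 0$ is justified by dominated convergence with majorant $|\nu(|\phi|^2)|\,|\phi|^{-2}$. Note, however, that this is not the route the paper takes for Theorem \ref{desigualdade22}: its proof excises small neighborhoods $\mathcal{V}_i$ of the finitely many umbilical points, applies the divergence theorem to $\Delta_\Sigma \log|\phi|^2$ on $\Sigma\setminus\bigcup_i\mathcal{V}_i$, and chooses $\partial\mathcal{V}_i$ to be level sets of $|\phi|^2$ so that the excised conormal terms have a definite sign; the entire purpose of this alternative proof is that the equality discussion then localizes exactly at the umbilical points.

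The genuine gap in your plan is the direction ``equality $\Rightarrow$ no umbilical points''. First, a correction: Kato's inequality with constant $\tfrac{n+2}{n}=2$ is \emph{not} automatic for $2\times 2$ trace-free symmetric tensors (for $\phi = f\,\mathrm{diag}(1,-1)$ on flat $\R^2$ one has $|\nabla\phi|^2 = 2|\nabla f|^2 < 4|\nabla f|^2 = 2|\nabla|\phi||^2$ wherever $\nabla f\neq 0$); it is an identity away from zeros because $\phi$ is a trace-free \emph{Codazzi} tensor (constant $H$, space form ambient), equivalently because the Hopf differential is holomorphic. Granting that fact, your chain of inequalities shows that equality in \eqref{ineq} is equivalent to
\[\lim_{\varepsilon\to 0}\ \varepsilon^{-2}\int_{\{|\phi|<\varepsilon\}}|\nabla_\Sigma|\phi||^2 \;=\; 0.\]
Your argument that this forces the zero set to be empty --- that ``the only way the limiting comparison is sharp is when $\Omega_\varepsilon$ exhausts $\Sigma$ with no leftover zero set'' --- does not work: for any non-totally-umbilical CMC surface the zero set of $\phi$ is finite, so $\Omega_\varepsilon$ exhausts $\Sigma$ up to a null set \emph{whether or not} umbilical points exist; exhaustion cannot distinguish equality from strict inequality. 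What is needed is a rate. Near an umbilical point, $|\phi|=|\psi|f$ with $\psi$ holomorphic vanishing to order $\theta\geq 1$ and $f>0$, so $|\phi|\approx c\,r^{\theta}$ and $|\nabla_\Sigma|\phi||\approx c\,\theta\,r^{\theta-1}$, while $\{|\phi|<\varepsilon\}$ contains a disk (or half-disk) of radius comparable to $(\varepsilon/c)^{1/\theta}$; hence
\[\varepsilon^{-2}\int_{\{|\phi|<\varepsilon\}}|\nabla_\Sigma|\phi||^2\ \gtrsim\ \varepsilon^{-2}\int_0^{(\varepsilon/c)^{1/\theta}} c^2\theta^2\, r^{2\theta-2}\, r\,dr\ \geq\ \mathrm{const}(\theta)\;>\;0,\]
so the deficit stays bounded away from zero and the inequality is strict. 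Only with this local estimate (the analogue of the paper's level-set device, or of the count $-2\pi\theta_\Sigma-\pi\theta_{\partial\Sigma}$ in Lemma \ref{laplaciano}) is the ``only if'' direction established. The converse direction (no umbilical points $\Rightarrow$ equality) is fine in your setup: $|\phi|$ is then bounded below, so $f_\varepsilon\equiv|\phi|$ for small $\varepsilon$, the term \eqref{zero} vanishes identically, and Simons' inequality \eqref{simon} is an identity by the same Kato/holomorphicity fact.
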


\begin{proof}
Let $\{x_1, \ldots, x_\ell, x_{\ell+1}, \ldots, x_m\},$ $m \geq 0,$ be the set of umbilical points of $\Sigma.$ We assume that $x_i\in\intt(\Sigma)$ for $i=1,\ldots,\ell$ and $x_i\in\partial\Sigma$ for $i=\ell+1,\ldots,m.$ Consider the neighborhoods $\mathcal{V}_1, \ldots, \mathcal{V}_m$ respectively of $x_1, \ldots, x_m$ such that $\mathcal{V}_i \cap \mathcal{V}_j = \emptyset$ if $i \neq j,$ and define $\overline{\Sigma} = \Sigma \setminus \cup_{i = 1}^m \mathcal{V}_i.$ Since $\phi \neq 0$ on $\overline{\Sigma},$ it follows by Lemma \ref{simon} and  the Divergence Theorem that
\[-2 \int_{\overline{\Sigma}} p_H(|\phi|) \leq \int_{\overline{\Sigma}} \Delta_\Sigma \log |\phi|^2 = \int_{\partial \overline{\Sigma}} |\phi|^{-2} \nu(|\phi|^2).\]
Thus, setting $M_i = \intt(\Sigma)\cap\partial\mathcal{V}_i$ for $i=\ell+1,\ldots,m$ and $N=\partial\Sigma\cap\left(\bigcup_{i=\ell+1}^\partial\partial\mathcal{V}_i\right)$, we have
\begin{align*}
	-2 \int_\Sigma p_H(|\phi|) + 2 \sum_{i = 1}^m &\int_{\mathcal{V}_i} p_H(|\phi|)\\
	&\leq \int_{\partial\Sigma\setminus N} |\phi|^{- 2}\nu(|\phi|^2)  - \sum_{i = 1}^\ell \int_{\partial \mathcal{V}_i} |\phi|^{- 2}\nu_i(|\phi|^2)  - \sum_{i = \ell + 1}^m \int_{M_i} |\phi|^{- 2}\nu_i(|\phi|^2) ,
\end{align*}
where $\nu$ is the outward conormal of $\partial \Sigma$ and $\nu_i$ is the outward conormal of $\partial \mathcal{V}_i.$ By regularity of the function $|\phi|^2,$ we can suppose that $\partial \mathcal{V}_i$ is a level set of $|\phi|^2.$ In this case, $\nu_i$ pointing to the direction of $\nabla_\Sigma |\phi|^2$ and, consequently, the inequality above becomes
\[- \int_\Sigma p_H(|\phi|) \leq \frac{1}{2} \int_{\partial \Sigma}|\phi|^{- 2} \nu(|\phi|^2) .\]
Moreover, the equality occurs if, and only if, there are no umbilical points in $\Sigma.$
\end{proof}

\begin{corollary}\label{igualdade}
	Let $\Sigma^2$ be a free boundary surface in $B.$ If $\Sigma$ has constant mean curvature $H,$ then $\Sigma$ is not a totally umbilical surface if, and only if, $\chi(\Sigma) \leq 0$. Moreover, equality occurs in \eqref{ineq} if, and only if, $\Sigma$ is an annulus.
\end{corollary}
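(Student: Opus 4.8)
The plan is to recognise that, in dimension $n=2$ and under the free boundary condition, the inequality \eqref{ineq} of Theorem \ref{desigualdade22} is nothing but a disguised form of the Gauss--Bonnet formula, so that its two sides are controlled by $\chi(\Sigma)$; the equality clause is then handled by combining the equality analysis of Theorem \ref{desigualdade22} with Corollary \ref{corolarioumbilico}. First I would evaluate both sides of \eqref{ineq} explicitly. On the boundary, since $\partial\Sigma$ is a curve, its umbilicity tensor $\widetilde\phi$ as a hypersurface of $\partial B$ vanishes identically; hence the second identity in \eqref{obsdSigma2} of Remark \ref{obsdSigma} collapses to $\nu(|\phi|^2)=-4\mu_0|\phi|^2$ along $\partial\Sigma$, so that $|\phi|^{-2}\nu(|\phi|^2)\equiv-4\mu_0$ is constant. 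In particular it is bounded, which secures the integrability hypothesis of Theorem \ref{desigualdade22}, and
\[\frac12\int_{\partial\Sigma}|\phi|^{-2}\nu(|\phi|^2)=-2\mu_0|\partial\Sigma|.\]
For the left-hand side, the $n=2$ polynomial is $p_H(t)=t^2-\frac{H^2}{2}-2c$, and the Gauss equation gives $-p_H(|\phi|)=\frac{H^2}{2}+2c-|\phi|^2=2K$, where $K$ is the intrinsic curvature of $\Sigma$; thus $-\int_\Sigma p_H(|\phi|)=2\int_\Sigma K$, which by the Gauss--Bonnet computation \eqref{integral1} equals $4\pi\chi(\Sigma)-2\mu_0|\partial\Sigma|$.

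Feeding these two evaluations into \eqref{ineq} then yields $4\pi\chi(\Sigma)-2\mu_0|\partial\Sigma|\le-2\mu_0|\partial\Sigma|$, that is, $\chi(\Sigma)\le0$ whenever $\Sigma$ is not totally umbilical. For the converse I would argue by contraposition: a compact totally umbilical free boundary surface in $B$ is a spherical cap or the equatorial disk, hence a topological disk with $\chi(\Sigma)=1>0$. Combining the two directions, and using the elementary fact that a compact surface with nonempty boundary has $\chi\le1$ with $\chi=1$ exactly for disks and $\chi\le0$ otherwise, gives the equivalence ``$\Sigma$ is not totally umbilical if and only if $\chi(\Sigma)\le0$''.

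For the equality statement I would simply chain together the two equality characterisations already available: by Theorem \ref{desigualdade22}, equality in \eqref{ineq} holds if and only if $\Sigma$ has no umbilical point, and by Corollary \ref{corolarioumbilico} a compact free boundary constant mean curvature surface has no umbilical point if and only if it is an annulus; this is consistent with the computation above, which forces $\chi(\Sigma)=0$ in the equality case. The hard part will be the boundary-term evaluation: one must notice that the vanishing of $\widetilde\phi$ on the one-dimensional boundary turns $|\phi|^{-2}\nu(|\phi|^2)$ into a constant, which both legitimises applying Theorem \ref{desigualdade22} and pins the boundary contribution down exactly so that the Gauss--Bonnet cancellation occurs; once this is in place, the remaining steps are bookkeeping and invocations of the cited results.
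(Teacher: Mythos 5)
Your proposal is correct and follows essentially the same route as the paper: evaluate the boundary term via Remark \ref{obsdSigma} (the vanishing of $\widetilde{\phi}$ on the one-dimensional boundary gives $|\phi|^{-2}\nu(|\phi|^2)=-4\mu_0$, hence $-2\mu_0|\partial\Sigma|$ for the right-hand side of \eqref{ineq}), use the Gauss equation to identify $p_H(|\phi|)=-2K$, and conclude $\chi(\Sigma)\le 0$ by Gauss--Bonnet. The only cosmetic differences are that you make explicit the converse direction (totally umbilical free boundary surfaces are disks, which the paper leaves implicit) and that you settle the equality clause by chaining the equality case of Theorem \ref{desigualdade22} with Corollary \ref{corolarioumbilico}, whereas the paper reads off $\chi(\Sigma)=0$ directly from the same inequality chain --- both are valid.
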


\begin{proof}
Note that, by equation \eqref{AemdSigma}, the geodesic curvature of any boundary connected component in the direction of $\nu$ as a curve in $\Sigma$ is $- \mu_0$. Since $\Sigma$ has isolated umbilical points, by \eqref{obsdSigma2} we get
	\begin{equation}\label{integral}
		\int_{\partial \Sigma} |\phi|^{- 2} \nu(|\phi|^2) = - 4 \mu_0 |\partial \Sigma|.
	\end{equation} 
Moreover, by Gauss Equation, we can check that $|\phi|^2 = -2(K - c) + H^2/2,$ where $K$ is the Gauss curvature of $\Sigma,$ which means that $p_H(|\phi|)=-2K$. By this, equation \eqref{integral}, Theorem \ref{desigualdade22} and the Gauss-Bonnet Theorem we obtain

    \[2 \mu_0 |\partial \Sigma| \leq \int_\Sigma p_H(|\phi|) = 2 \left(\mu_0 |\partial \Sigma| - 2\pi \chi(\Sigma)\right).\]
    Therefore, $\chi(\Sigma) \leq 0.$ In particular, equality occurs in \eqref{ineq} if, and only if, $\chi(\Sigma) = 0.$
\end{proof}

\begin{remark}
    We point out that by \cite[Lemma 4.3]{li2019free}, in the Euclidean case, a minimal annulus in $B$ cannot have umbilical point. In this sense, the assertions of Corollary \ref{igualdade} reformulate the Catenoid Conjecture as a unique solution problem of equality case in \eqref{ineq}.
\end{remark}

\begin{remark}
	Note that Corollary \ref{igualdade} implies the Nitsche Theorem, and it proves Corollary \ref{corolarioumbilico}.\break
\end{remark}

\bibliographystyle{amsplain}
\bibliography{references}
\end{document}